\numberwithin{equation}{section}
\newtheorem{prop}{Proposition}[section]
\newtheorem{theo}[prop]{Theorem}
\newtheorem{lemm}[prop]{Lemma}
\newtheorem{coro}[prop]{Corollary}
\newtheorem{rema}[prop]{Remark}
\def\and{\quad{\rm and}\quad}
\def\<{\langle}
\def\>{\rangle}
\title{Inverse curvature flow in anti-de Sitter-Schwarzschild manifold}
\author{
        Siyuan Lu
        }
\address{Department of Mathematics and Statistics, McGill University, 805 Sherbrooke O, Montreal, Quebec, Canada, H3A 0B9}
\email{siyuan.lu@mail.mcgill.ca}
\thanks{Research of the author was supported in part by CSC fellowship and Schulich Graduate fellowship.}
\begin{document}

\begin{abstract}
In this paper, we consider the inverse hessian quotient curvature flow with star-shaped initial hypersurface in anti-de Sitter-Schwarzschild manifold. We prove that the solution exists for all time, and the second fundamental form converges to identity exponentially fast.
\end{abstract}

\maketitle

\section{Introduction}
Curvature flows of compact hypersurfaces in Riemannian manifolds have been extensively studied in the last 30 years. In the case of Euclidean space, for contracting flow, Huisken \cite{H1} considered
\begin{align}\label{MCF}
\dot{X}=-H\nu
\end{align}
where $H$ is the mean curvature. He proved that the solution exists for all time and the normalized flow converges to a round sphere if the initial hypersurface is convex. 

This result is later generalized by Andrews \cite{A1} for a large class of curvature flow. More specificly, Andrews considered
\begin{align}\label{CF}
\dot{X}=-F\nu
\end{align}
where $F$ is a concave function of homogeneous degree one, evaluated at the principal curvature. 

For expanding flow, Gerhardt \cite{G1} and Urbas \cite{U} considered 
\begin{align}\label{ICF}
\dot{X}=\frac{\nu}{F}
\end{align}
where $F$ is a concave function of homogeneous degree one, evaluated at the principal curvature. They proved that the solution exists for all time and the normalized flow converges to a round sphere if the initial hypersurface is star-shaped and lies in a certain convex cone.

A natural question is whether these results remain true if the ambient space is no longer Euclidean space. For contraction flow (\ref{MCF}) and (\ref{CF}), Huisken \cite{H2} and Andrews \cite{A2} generalized their results to certain ambient space respectively.

The case of expanding flow (\ref{ICF}) is in fact more subtle as the assumption on initial hypersurface is weaker. In the case of space form, Gerhardt \cite{G2,G3} proved the solution exists for all time and the second fundamental form converges in hyperbolic space and sphere space, see also earlier work by Ding \cite{D}. More recently, Brendle-Hung-Wang \cite{BHW} and Scheuer \cite{S2} proved that the same results hold in anti-de Sitter-Schwarzschild manifold and a class of warped product manifold for inverse mean curvature flow, which is
\begin{align}\label{IMCF}
\dot{X}=\frac{\nu}{H}
\end{align}
However, as pointed out by Neves \cite{N} and Hung-Wang in \cite{HW}, for inverse mean curvature flow, the rescaled hypersurface is not necessary a round sphere in anti-de Sitter-Schwarzschild manifold and in hyperbolic space.

Inverse curvature flows can be used to prove various inequalities. Guan-Li \cite{GL1} generalized Alexandrov-Fenchel inequalities for star-shaped $k$-convex hypersurface in Euclidean space using inverse curvature flow (\ref{ICF}) in Euclidean space. Recently, Brendle-Hung-Wang \cite{BHW} generalized Alexandrov-Fenchel inequality for $k=1$ (which they call Minkowski inequality) in anti-de Sitter-Schwarzschild manifold by inverse mean curvature flow (\ref{IMCF}). The inequality is further used to prove a Penrose inequality in General Relativity in \cite{BW}. More recently, Li-Wei-Xiong \cite{LWX} and Ge-Wang-Wu\cite{GWW} generalized the hyperbolic Alexandrov-Fenchel inequality using inverse curvature flow (\ref{ICF}) in hyperbolic space.

Motivated by the results above, we consider inverse curvature flow in anti-de Sitter-Schwarzschild manifold. The anti-de Sitter-Schwarzschild manifold is a manifold $N=\mathbb{S}^n\times [s_0,\infty)$ equipped with the following Riemannian metric
\begin{align*}
\bar{g}=\frac{1}{1-ms^{1-n}+s^2}ds^2+s^2g_{\mathbb{S}^n}
\end{align*}
where $s_0$ is the unique positive solution of the equation $1-ms^{1-n}+s^2=0$. By a change of variable, we have
\begin{align*}
\bar{g}=dr^2+\phi^2(r)g_{\mathbb{S}^n}
\end{align*}
where $\phi$ satisfies $\phi^\prime=\sqrt{1-m\phi^{1-n}+\phi^2}$.

The anti-de Sitter-Schwarzschild manifold is thus a special case of warped product manifold. Moreover, the sectional curvature of $(N,\bar{g})$ approach $-1$ near infinity exponentially fast and the scalar curvature is of constant $-n(n+1)$. This feature will play an essential role in the proof of our theorem.

To state our theorem, we need the following definition of Garding's $\Gamma_k$ cone $\Gamma_k=\{(\kappa_i)\in \mathbb{R}^n|\sigma_j>0,0\leq j\leq k\}$, where $\sigma_j$ is the $j$-th elementrary symmetric function. We say a hypersurface is $k$-convex if the principal curvature $(\kappa_i)\in \Gamma_k$. 

We now state our main theorem:
\begin{theo}\label{thm1}
Let $\Sigma^n_0$ be a star-shaped, $k$-convex closed hypersurface in $N^{n+1}$, where $N^{n+1}$ is an anti-de Sitter-Schwarzschild manifold, consider the evolution equation
\begin{align}
\dot{X}=\frac{\nu}{F}
\end{align}
where $\nu$ is the ourward unit normal and $F=n\frac{C^{k-1}_n}{C^k_n}\frac{\sigma_k}{\sigma_{k-1}}$ which is evaluated at the principal curvature of $\Sigma_t$. Then the solution exists for all time $t$, and the second fundamental form satisfies
\begin{align*}
|h^i_j-\delta^i_j|\leq Ce^{-\frac{2}{n}t}
\end{align*}
where $C$ depends on the $\Sigma_0,n,k$.
\end{theo}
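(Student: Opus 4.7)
The plan is to parametrize the evolving hypersurfaces as radial graphs $r=r(\theta,t)$ over $\mathbb{S}^n$, exploiting that star-shapedness is preserved under this flow. In these coordinates the equation becomes a scalar fully nonlinear parabolic PDE for $r$, concave in the Hessian variables since $\sigma_k/\sigma_{k-1}$ is concave on $\Gamma_k$. A coordinate slice $\{r=\text{const}\}$ has principal curvatures all equal to $\phi'/\phi$, so that $F=n\phi'/\phi$, and such slices supply natural barriers: they evolve by $\dot r=\phi/(n\phi')$, and since $\phi'/\phi\to 1$ at infinity this gives $r\sim t/n$. Comparing $\phi(r)e^{-t/n}$ against upper and lower coordinate-slice barriers via the maximum principle yields the $C^0$ estimate $|r(\cdot,t)-t/n|\le C$.

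For the $C^1$ estimate, let $v=\langle\partial_r,\nu\rangle^{-1}$, which measures how far $\Sigma_t$ is from a coordinate slice. A direct computation of $\partial_t v$ produces a zeroth-order term proportional to $-\phi''(r)/(\phi F)$ times a nonnegative combination of $v^2-1$ and gradient terms. Since in anti-de Sitter--Schwarzschild one has $\phi''=\phi+\tfrac{(n-1)m}{2}\phi^{-n}>0$, this favorable sign, combined with the $C^0$ bound on $F$, lets the maximum principle give a uniform bound $v\le C$, hence a uniform $C^1$ bound on $r$ relative to $\phi$.

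The heart of the argument is the $C^2$ estimate. One needs simultaneously a positive lower bound on $F$, which prevents degeneracy and preserves $k$-convexity, and an upper bound on the largest principal curvature $\kappa_{\max}$. For the first, one tests an auxiliary quantity of the form $\log F+\alpha\,r$ against the maximum principle; for the second, one works with $\log\kappa_{\max}-f(r,v)$ for some $f$ chosen to absorb the ambient-curvature error terms into the favorable $\phi''/\phi$ contribution. This step is the main obstacle: unlike the inverse mean curvature flow treated in \cite{BHW}, the quotient $\sigma_k/\sigma_{k-1}$ generates additional algebraic error terms in its evolution equation, and the non-constant ambient sectional curvature contributes further terms that must be carefully controlled using the asymptotic hyperbolicity of $(N,\bar g)$.

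Once uniform $C^2$ estimates are in hand, the equation is uniformly parabolic; Krylov--Safonov and Schauder theory give higher regularity, and long-time existence follows by continuation. For the exponential convergence $|h^i_j-\delta^i_j|\le Ce^{-\frac{2}{n}t}$, one works with the rescaled traceless second fundamental form (or equivalently $h^i_j-\delta^i_j$ directly) and derives a linear parabolic evolution inequality whose forcing terms arise from $\phi'/\phi-1=O(\phi^{-2})$ and from the rate at which the ambient sectional curvature tends to $-1$, both of order $O(\phi^{-2})=O(e^{-\frac{2}{n}t})$ along the flow by the $C^0$ estimate. A direct maximum principle argument then gives the claimed exponential rate.
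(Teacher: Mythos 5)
Your outline tracks the paper's overall architecture (graph parametrization, $C^0$ via barriers, $C^1$ via a gradient quantity with the favorable $\phi''>0$ sign, $C^2$ via test functions involving $\log F$ and $\log\kappa_{\max}$, then Evans--Krylov/Schauder and continuation). The test functions you suggest for the $C^2$ step differ cosmetically from the paper's choices ($-\log F - \log\tilde u$ and $\log\kappa_{\max} - \log\tilde u$ with $\tilde u = u e^{-t/n}$), but they are in the same spirit and would plausibly work.

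The genuine gap is in the final step. You jump from uniform $C^2$ bounds directly to ``a linear parabolic evolution inequality'' for $h^i_j - \delta^i_j$ with forcing $O(e^{-\frac{2}{n}t})$, asserting that a maximum principle argument then gives the claimed rate. But the zeroth-order structure of the evolution equation for $h^i_j - \delta^i_j$ is not linear with a fixed negative coefficient: writing $A = h - I$, the dominant term is $-\frac{2}{F}A$ plus quadratic and cubic terms such as $-\frac{1}{F}A^2$ and the $F^{pq}h^m_q h_{km}h_{pj}$-type terms from the interchange formula. Uniform $C^2$ bounds alone only say $F \ge c$ and $|\kappa_i|\le C$; they do not force $F\to n$ or $\kappa_i\to 1$, so the linear coefficient could be much weaker than $-\frac{4}{n}$ and the quadratic error terms need not be small. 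Without a pinching statement, the maximum principle gives at best decay at an unquantified rate governed by the a priori bounds, not the sharp $e^{-\frac{2}{n}t}$.

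What the paper does, and what your proposal is missing, is an intermediate pinching step showing $\kappa_i(\cdot,t)\to 1$ as $t\to\infty$. This is proved with a Scheuer-type time-weighted test function $w = \bigl(\log\kappa_{\max} - \log\tilde u + r - \log 2\bigr)t$, where the explicit factor of $t$ is essential: the $O(e^{-\alpha t})$ ambient-curvature errors are only borderline with the barrier terms, and multiplying by $t$ converts boundedness of $w$ into $\kappa_{\max} \le 1 + C/t$. Together with a matching lower bound for $F$ (obtained from $w = v/F$), this yields $|h^i_j-\delta^i_j|\to 0$, after which the linearization genuinely has coefficient close to $-\frac{4}{n}$ and a bootstrap (first a slow exponential rate, then the optimal one) gives $|h^i_j - \delta^i_j|\le Ce^{-\frac{2}{n}t}$. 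You should add this pinching lemma and the two-stage bootstrap; without it, the convergence-rate argument does not close.
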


The organization of the paper is as follows: in section 2, we give some preliminaries about warped product space and anti-de Sitter-Schwarzschild manifold, we also prove the $C^0$ estimate. In section 3, we derive the evolution equations and give the $C^1$ estimate. In section 4 and 5, we estimate the bound for $F$ and the principal curvature respectively. In section 6, we prove that the second fundamental form converges to identity.

After submitting the paper, we have learned that Chen-Mao \cite{CM} independently proved the main theorem above.

\section{Preliminaries}
In this section, we give some basic properties of hypersurface in warped product space. Let $N^{n+1}$ be a warped product space, with the metric 
\begin{equation}
g^N:= ds^2=dr^2+\phi^2(r)\sigma_{ij}
\end{equation}
where $\sigma_{ij}$ is the standard metric of $\mathbb{S}^n$.

Define 
\begin{align*}
\Phi(r)=\int_0^r\phi(\rho)d\rho,\quad V=\phi(r)\frac{\partial}{\partial r}
\end{align*}
We state some well-known lemmas, see \cite{GL2} with some modification.
\begin{lemm}
The vector field $V$ satisfies $D_iV_j=\phi^\prime(r)g_{ij}^N$, where $D$ is the covariant derivative with respect to the metric $g^N$.
\end{lemm}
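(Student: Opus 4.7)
The plan is to verify the identity by a direct computation in the adapted warped product chart $(r,\theta^1,\ldots,\theta^n)$. The key first observation is that $V=\phi(r)\partial_r$ coincides with the gradient $\nabla^N\Phi$ of the potential $\Phi(r)=\int_0^r\phi(\rho)\,d\rho$, because $g^N_{rr}=1$ and $\partial_r\Phi=\phi$. Consequently $D_iV_j$ is nothing but the covariant Hessian $\nabla^2\Phi$, which is automatically symmetric, so no separate symmetrization argument is needed.

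Next I would compute $\nabla^2\Phi$ using the Christoffel symbols of $g^N=dr^2+\phi^2\sigma_{ab}$. These split cleanly: the only nonvanishing ones involving an $r$-index are $\Gamma^r_{ab}=-\phi\phi'\sigma_{ab}$ and $\Gamma^a_{rb}=(\phi'/\phi)\delta^a_b$, while the purely tangential $\Gamma^a_{bc}$ are the Christoffel symbols of $\mathbb{S}^n$ and play no role because $\Phi$ depends only on $r$. Using $\partial_\mu\Phi=\phi\,\delta^r_\mu$ and $\partial_\mu\partial_\nu\Phi=\phi'\,\delta^r_\mu\delta^r_\nu$, the identity $\nabla^2_{\mu\nu}\Phi=\partial_\mu\partial_\nu\Phi-\Gamma^\lambda_{\mu\nu}\partial_\lambda\Phi$ reduces to three blocks.

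The $(r,r)$ block gives $\phi'$, which equals $\phi'g^N_{rr}$; the mixed $(r,a)$ components vanish because $\Gamma^r_{ra}=0$; in the angular block one finds $-\Gamma^r_{ab}\cdot\phi=\phi^2\phi'\sigma_{ab}=\phi'g^N_{ab}$. There is no real obstacle — the result is essentially a one-shot coordinate computation — so the only care required is to book-keep the factors of $\phi$ correctly. A more structural alternative would be to note that $V$ is conformal Killing with factor $\phi'$, so $\mathcal{L}_V g^N=2\phi'g^N$, and then combine this with the symmetry of $\nabla V$ (which follows from $V=\nabla^N\Phi$) to split the conformal Killing equation and conclude.
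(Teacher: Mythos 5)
Your computation is correct. The paper does not actually prove this lemma: it is stated as a well-known fact with a pointer to Guan--Li \cite{GL2}, so there is no in-paper proof to compare against. Your argument is the standard one for warped products and fills in exactly what the reference would supply.

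Two small points worth noting. First, the identification $V=\nabla^N\Phi$ (equivalently $V_\mu=\partial_\mu\Phi$, since $V_r=g^N_{rr}\phi=\phi$ and $V_a=0$) is the right bookkeeping step, and it is what makes $D_iV_j$ automatically symmetric; your Christoffel computation in the three blocks $(r,r)$, $(r,a)$, $(a,b)$ then checks out, with the $(a,b)$ block giving $-\Gamma^r_{ab}\,\phi=\phi^2\phi'\sigma_{ab}=\phi'g^N_{ab}$ as you say. Second, your alternative remark is slightly circular as phrased: to know that $V$ is conformal Killing with factor $\phi'$ one essentially has to do the same Christoffel computation (or appeal to the fact that $V=\nabla^N\Phi$ together with the Hessian formula you already derived), so it is better regarded as a restatement of the lemma than as an independent route to it. The honest structural shortcut is simply: $V=\nabla^N\Phi$ implies $D_iV_j$ is the Hessian of $\Phi$, hence symmetric, and then one computes the Hessian directly as you did.
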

\begin{lemm}
Let $M^n\subset N^{n+1}$ be a closed hypersurface with induced metric $g$, then $\Phi|_M$ satisfies,
\begin{align*}
\nabla_i\nabla_j\Phi =\phi^\prime(r)g_{ij}-h_{ij}\left\langle V,\nu\right\rangle,
\end{align*}
where $\nabla$ is the covariant derivative with respect to $g$, $\nu$ is the outward unit normal and $h_{ij}$ is the second fundamental form of the hypersurface. 
\end{lemm}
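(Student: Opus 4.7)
The plan is to compute the intrinsic Hessian $\nabla_i\nabla_j\Phi$ by first relating it to the ambient Hessian $D_iD_j\Phi$ via the Gauss formula, and then identifying the ambient Hessian using Lemma 1.

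First, I would observe that the vector field $V$ is precisely the ambient gradient of $\Phi$. Indeed, since $\Phi$ depends only on $r$ and the warped-product metric $g^N=dr^2+\phi^2(r)\sigma_{ij}$ makes $\partial_r$ a unit vector orthogonal to the spherical factor, one has $D\Phi=\Phi'(r)\,\partial_r=\phi(r)\,\partial_r=V$. Consequently, for tangent vectors $e_i,e_j$ to $M$,
\begin{align*}
D_iD_j\Phi = D_iV_j = \phi'(r)\,g^N_{ij} = \phi'(r)\,g_{ij},
\end{align*}
where the middle equality is Lemma 1 and the last equality uses that $g^N$ restricted to tangent vectors of $M$ is the induced metric $g$.

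Next, I would apply the Gauss formula. With the convention that $h_{ij}$ is the second fundamental form with respect to the outward normal $\nu$, the tangential and normal decomposition of the ambient connection reads $D_{e_i}e_j=\nabla_{e_i}e_j-h_{ij}\nu$ (the sign being fixed by requiring a round sphere to have positive $h_{ij}$). Therefore, for any smooth function $f$ on $N$ restricted to $M$,
\begin{align*}
\nabla_i\nabla_j f = e_i(e_j f)-(\nabla_{e_i}e_j)f = D_iD_j f - h_{ij}\left\langle Df,\nu\right\rangle.
\end{align*}
Specializing to $f=\Phi$, using $D\Phi=V$, and substituting the expression for $D_iD_j\Phi$ from the previous step yields
\begin{align*}
\nabla_i\nabla_j\Phi = \phi'(r)\,g_{ij} - h_{ij}\left\langle V,\nu\right\rangle,
\end{align*}
as claimed.

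There is no real analytical obstacle here: the content of the lemma is just Lemma 1 combined with the Gauss equation for Hessians of restricted functions. The only point requiring care is the bookkeeping of signs, specifically ensuring that the sign convention for $h_{ij}$ relative to the outward normal $\nu$ matches the one used throughout the paper (so that the principal curvatures of a round sphere are positive), which is what produces the minus sign in front of $h_{ij}\langle V,\nu\rangle$ rather than a plus.
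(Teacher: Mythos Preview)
Your proof is correct and is exactly the standard argument: identify $D\Phi=V$, apply Lemma~1 to get the ambient Hessian, and then use the Gauss formula $D_{e_i}e_j=\nabla_{e_i}e_j-h_{ij}\nu$ to pass to the intrinsic Hessian. The paper itself does not supply a proof of this lemma; it is stated as well known with a reference to \cite{GL2}, so there is no alternative argument to compare against.
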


We now state the Gauss equation Codazzi equation,
\begin{align}\label{Gauss}
R_{ijkl}=\bar{R}_{ijkl}+\left(h_{ik}h_{jl}-h_{il}h_{jk}\right)
\end{align}
\begin{align}\label{Codazzi}
\nabla_k h_{ij}-\nabla_j h_{ik}=\bar{R}_{\nu ijk}
\end{align}
and the interchanging formula
\begin{align}\label{interchanging formula}
\nabla_i\nabla_jh_{kl}=&\nabla_k\nabla_lh_{ij}-h^m_{l}(h_{im}h_{kj}-h_{ij}h_{mk})-h^m_{j}(h_{mi}h_{kl}-h_{il}h_{mk})\\\nonumber
&+h^m_{l}\bar{R}_{ikjm}+h^m_{j}\bar{R}_{iklm}+\nabla_k\bar{R}_{ijl\nu}+\nabla_i\bar{R}_{jkl\nu}
\end{align}

Define the support function $u=\left\langle V,\nu\right\rangle$, and we have
\begin{lemm}\label{support function}
\begin{align*}
\nabla_i u &=g^{kl}h_{ik}\nabla_l \Phi,\\
\nabla_i\nabla_j u&=g^{kl}\nabla_k h_{ij}\nabla_l\Phi+\phi^\prime h_{ij}-(h^2)_{ij}u+g^{kl}\nabla_l\Phi\bar{R}_{\nu jki},
\end{align*}
where $(h^2)_{ij}=g^{kl}h_{ik}h_{jl}$, $\bar{R}_{\nu jki}$ is the curvature of ambient space.
\end{lemm}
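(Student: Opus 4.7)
The plan is to derive the first identity by direct differentiation of $u=\langle V,\nu\rangle$ using the ambient Levi-Civita connection $D$, and then to derive the Hessian by differentiating the resulting expression once more, invoking Lemma 2 and the Codazzi equation.

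For the first formula, I would pick a local frame $\{e_i\}$ on $M$ and write $\nabla_i u = e_i\langle V,\nu\rangle = \langle D_{e_i}V,\nu\rangle + \langle V,D_{e_i}\nu\rangle$. The first term vanishes: by the preceding lemma $D_{e_i}V=\phi'(r)\,e_i$, which is tangential, so its pairing with $\nu$ is zero. For the second term, Weingarten gives $D_{e_i}\nu = h_i^{\,k} e_k = g^{kl}h_{ik}e_l$, hence $\langle V,D_{e_i}\nu\rangle = g^{kl}h_{ik}\langle V,e_l\rangle$. Since $V=\phi(r)\partial_r$ is precisely the ambient gradient of $\Phi$, its tangential part is the intrinsic gradient of $\Phi|_M$; that is, $\langle V,e_l\rangle = \nabla_l\Phi$. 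Combining gives $\nabla_i u = g^{kl}h_{ik}\nabla_l\Phi$ as required.

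For the second formula, I would differentiate the expression just obtained using the Leibniz rule:
\begin{equation*}
\nabla_j\nabla_i u = g^{kl}(\nabla_j h_{ik})\nabla_l\Phi + g^{kl}h_{ik}\nabla_j\nabla_l\Phi.
\end{equation*}
The second piece is handled by Lemma 2: substituting $\nabla_j\nabla_l\Phi = \phi'\,g_{jl}-h_{jl}u$ and contracting with $g^{kl}h_{ik}$ yields exactly $\phi'\,h_{ij}-(h^2)_{ij}u$. The first piece is then massaged via Codazzi: the identity $\nabla_k h_{ij}-\nabla_j h_{ik}=\bar{R}_{\nu ijk}$ lets me replace $\nabla_j h_{ik}$ by $\nabla_k h_{ij}$ plus a curvature correction. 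Finally, I would invoke the Riemann symmetries (antisymmetry in the last pair of indices, together with pair interchange) to cast the resulting curvature term as $\bar{R}_{\nu jki}$, matching the index order in the statement.

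The only delicate step is this last bit of index bookkeeping. Substantively the identity is a mechanical consequence of Lemmas 1 and 2, Weingarten, and Codazzi, but the precise placement of the indices on $\bar R$ in the statement depends on the author's sign convention for Codazzi, so one has to track signs carefully to land on $\bar R_{\nu jki}$ rather than $\pm \bar R_{\nu ijk}$. No curvature identities beyond the algebraic symmetries of Riemann are needed; in particular, no appeal to the Gauss equation is required at this stage.
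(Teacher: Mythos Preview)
Your proposal is correct and follows essentially the same route as the paper: differentiate $u=\langle V,\nu\rangle$ using $D_{e_i}V=\phi' e_i$ and Weingarten for the first identity, then apply Leibniz together with the Hessian formula for $\Phi$ (Lemma~2.2) and the Codazzi equation for the second. One small remark on the bookkeeping: the paper differentiates in the order $\nabla_i(\nabla_j u)$, obtaining $\nabla_i h_{jk}$, and then Codazzi (with $i\leftrightarrow j$) plus antisymmetry in the last pair gives $\bar R_{\nu jki}$ directly; with your order $\nabla_j(\nabla_i u)$ the Codazzi step produces $-\bar R_{\nu ijk}=\bar R_{\nu ikj}$, which is not turned into $\bar R_{\nu jki}$ by the pair-swap and antisymmetry alone, so to land on the exact index pattern in the statement it is cleanest to differentiate in the paper's order.
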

\begin{proof}
We only need to prove the equality at one point, thus we have $g_{ij}=\delta_{ij}$ and $\nabla_i u=D_i\left\langle V,\nu\right\rangle=\left\langle D\Phi,D_i\nu\right\rangle=h_{ik}D_k\Phi$.
\begin{align*}
\nabla_i\nabla_j u&=\nabla_i h_{jk}\nabla_k\Phi+h_{jk}\nabla_i\nabla_k\Phi\\
&=\nabla_i h_{jk}\nabla_k\Phi+h_{jk}(\phi^\prime g_{ik}-h_{ik}u)\\
&=\left(\nabla_k h_{ij}+\bar{R}_{\nu jki}\right)\nabla_k\Phi+\phi^\prime h_{ij}-(h^2)_{ij}u,
\end{align*}
where Codazzi equation (\ref{Codazzi}) is used in the last equality, thus by the tensorial property, we have the lemma.
\end{proof}

As to the curvature, we have the following curvature estimates, for proof, we refer readers to \cite{BHW}.
\begin{lemm}\label{ambient curvature}
The sectional curvature satisfies 
\begin{align*}
&\bar{R}(\partial_i,\partial_j,\partial_k,\partial_l)=\phi^2\left(1-{\phi^\prime}^2\right)(\sigma_{ik}\sigma_{jl}-\sigma_{il}\sigma_{jk})\\
&\bar{R}(\partial_i,\partial_r,\partial_j,\partial_r)=-\phi\phi^{\prime\prime}\sigma_{ij}
\end{align*}
where $\partial_i$ is the standard frame on $\mathbb{S}^n$ and $\sigma_{ij}$ is the standard metric of $\mathbb{S}^n$.
\end{lemm}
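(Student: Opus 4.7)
The plan is to prove both identities by a direct coordinate computation from the warped-product structure $\bar g = dr^2 + \phi^2(r)\sigma_{ij}\,dx^i dx^j$, using the fact that $(\mathbb S^n,\sigma)$ has constant sectional curvature $1$. The first task is to extract the Christoffel symbols: a direct application of the standard formula shows that the only non-zero components are
\begin{align*}
\bar\Gamma^r_{ij} = -\phi\phi^\prime\sigma_{ij},\qquad \bar\Gamma^i_{rj} = \frac{\phi^\prime}{\phi}\delta^i_j,\qquad \bar\Gamma^i_{jk} = \Gamma[\sigma]^i_{jk},
\end{align*}
where $\Gamma[\sigma]$ denotes the Levi-Civita connection of $\sigma$ on $\mathbb S^n$.

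For the radial component $\bar R(\partial_i,\partial_r,\partial_j,\partial_r)$, I would use $[\partial_r,\partial_j]=0$ and $\bar\nabla_{\partial_r}\partial_r=0$ to reduce $R(\partial_j,\partial_r)\partial_r$ to $-\bar\nabla_{\partial_r}\bar\nabla_{\partial_j}\partial_r$. Expanding $\bar\nabla_{\partial_r}[(\phi^\prime/\phi)\partial_j]$ via the product rule, the derivative $\partial_r(\phi^\prime/\phi) = \phi^{\prime\prime}/\phi - (\phi^\prime/\phi)^2$ combines with $(\phi^\prime/\phi)^2\partial_j$ to leave $(\phi^{\prime\prime}/\phi)\partial_j$. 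Pairing with $\partial_i$ via $\bar g_{ij} = \phi^2\sigma_{ij}$ and tracking the paper's sign convention for $\bar R$ then yields the stated $-\phi\phi^{\prime\prime}\sigma_{ij}$.

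For the tangential component, I would expand $\bar\nabla_{\partial_i}\bar\nabla_{\partial_j}\partial_k$ one Christoffel at a time and group the result into a $\partial_m$-part and a $\partial_r$-part. The $\partial_m$-part assembles into the spherical Riemann tensor $R[\sigma]^m_{jki}$ (from the pieces involving only $\bar\Gamma^m_{jk} = \Gamma[\sigma]^m_{jk}$) together with a correction $-(\phi^\prime)^2(\sigma_{jk}\delta^m_i - \sigma_{ik}\delta^m_j)\partial_m$ arising from the cross-composition of $\bar\Gamma^r$ with $\bar\Gamma^i_{rj}$. The $\partial_r$-parts, after subtracting the $i\leftrightarrow j$ swap, cancel identically once one applies the metric-compatibility identity $\partial_i\sigma_{jk} = \Gamma[\sigma]^l_{ij}\sigma_{lk}+\Gamma[\sigma]^l_{ik}\sigma_{jl}$ together with the symmetry $\Gamma[\sigma]^l_{ij} = \Gamma[\sigma]^l_{ji}$. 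Since $\sigma$ has constant sectional curvature $1$, $R[\sigma]^m_{jki} = \sigma_{jk}\delta^m_i - \sigma_{ik}\delta^m_j$, and combining gives $R(\partial_i,\partial_j)\partial_k = (1-(\phi^\prime)^2)(\sigma_{jk}\partial_i-\sigma_{ik}\partial_j)$. Contracting against $\partial_l$ with $\bar g$ yields the stated $\phi^2(1-(\phi^\prime)^2)(\sigma_{ik}\sigma_{jl}-\sigma_{il}\sigma_{jk})$.

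The only step requiring real care is the cancellation of the $\partial_r$-part in the tangential case and consistency with the paper's sign convention for $\bar R$; the latter can be sanity-checked against the hyperbolic case $\phi=\sinh r$ (for which $\phi^{\prime\prime}=\phi$ and $1-(\phi^\prime)^2 = -\phi^2$), where all sectional curvatures must equal $-1$. A more conceptual alternative is to invoke O'Neill's general warped-product curvature identities, but the direct calculation is essentially as short given how few Christoffels are non-zero.
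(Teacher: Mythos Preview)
Your direct Christoffel-symbol computation is correct, including the cancellation of the $\partial_r$-part via metric compatibility and the sanity check against $\phi=\sinh r$ to fix the sign convention. Note, however, that the paper does not actually supply a proof of this lemma: it simply refers the reader to \cite{BHW}. Your argument is precisely the standard warped-product calculation (equivalent to specializing O'Neill's formulas) that one would find carried out there, so there is nothing to compare---you have filled in what the paper outsources.
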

Now, back to our case that $N$ is an anti-de Sitter-Schwarzschild manifold, 
\begin{lemm}\label{ADS curvature}
Let $N$ be an anti-de Sitter-Schwarzschild manifold, we have
\begin{align}
\phi(r)=\sinh(r)+\frac{m}{2(n+1)}\sinh^{-n}(r)+O(\sinh^{-n-2}(r))
\end{align}
and
\begin{align*}
\bar{R}_{\alpha\beta\gamma\mu}=-\delta_{\alpha\gamma}\delta_{\beta\mu}+\delta_{\alpha\mu}\delta_{\beta\gamma}+O(e^{-(n+1)r})
\end{align*}
\begin{align*}
\bar{\nabla}_\rho \bar{R}_{\alpha\beta\gamma\mu}=O(e^{-(n+1)r})
\end{align*}
where $\{e_\alpha\}$ is an orthonormal frame in $N$.
\end{lemm}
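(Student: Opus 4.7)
The plan is to prove the three assertions in order: first extract the asymptotic expansion of $\phi$ from its defining ODE, then translate the explicit sectional curvature formulas of Lemma \ref{ambient curvature} into the required form through this expansion, and finally bound $\bar{\nabla}\bar{R}$ by separating out the piece of $\bar{R}$ built purely from the metric, which is parallel.

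For the expansion of $\phi$, I would begin from $\phi'=\sqrt{1+\phi^2-m\phi^{1-n}}$ and separate variables,
\[
r = \int_{s_0}^{\phi(r)}\frac{d\tau}{\sqrt{1+\tau^2-m\tau^{1-n}}}.
\]
For $\tau$ large one expands
\[
\frac{1}{\sqrt{1+\tau^2-m\tau^{1-n}}} = \frac{1}{\sqrt{1+\tau^2}} + \frac{m\tau^{1-n}}{2(1+\tau^2)^{3/2}} + O(\tau^{-n-4}),
\]
and the first term integrates to $\operatorname{arcsinh}$ while the second integrates to $-\frac{m}{2(n+1)}\phi^{-n-1}+O(\phi^{-n-3})$ plus a constant. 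Absorbing the finite constants into a single translation of the origin in $r$, this yields $\operatorname{arcsinh}(\phi)=r+\frac{m}{2(n+1)}\phi^{-n-1}+O(\phi^{-n-3})$. Applying $\sinh$ to both sides and using $\cosh(r)=\sinh(r)(1+O(e^{-2r}))$ gives the claimed expansion.

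For the curvature estimate, I would work in the orthonormal frame $e_r=\partial_r$, $e_i=\phi^{-1}\partial_i$. Lemma \ref{ambient curvature} then gives $\bar{R}_{ijkl}=\frac{1-(\phi')^2}{\phi^2}(\delta_{ik}\delta_{jl}-\delta_{il}\delta_{jk})$ and $\bar{R}_{irjr}=-\frac{\phi''}{\phi}\delta_{ij}$, with the remaining components zero by the warped-product structure. From the ODE, $1-(\phi')^2=m\phi^{1-n}-\phi^2$ and $\phi''=\phi+\tfrac{m(n-1)}{2}\phi^{-n}$, so
\[
\bar{R}_{ijkl}=-(\delta_{ik}\delta_{jl}-\delta_{il}\delta_{jk}) + m\phi^{-1-n}(\delta_{ik}\delta_{jl}-\delta_{il}\delta_{jk}),\qquad \bar{R}_{irjr}=-\delta_{ij}-\tfrac{m(n-1)}{2}\phi^{-1-n}\delta_{ij}.
\]
Assembling these, $\bar{R}_{\alpha\beta\gamma\mu}=-\delta_{\alpha\gamma}\delta_{\beta\mu}+\delta_{\alpha\mu}\delta_{\beta\gamma}+O(\phi^{-1-n})$, and $\phi^{-1-n}=O(e^{-(n+1)r})$ by the first part.

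For the derivative estimate, the key is to write $\bar{R}=\bar{R}^{(0)}+\bar{S}$ with the reference tensor
\[
\bar{R}^{(0)}_{\alpha\beta\gamma\mu}:=-\bar{g}_{\alpha\gamma}\bar{g}_{\beta\mu}+\bar{g}_{\alpha\mu}\bar{g}_{\beta\gamma}
\]
built from $\bar{g}$ alone; since $\bar{\nabla}\bar{g}=0$ we have $\bar{\nabla}\bar{R}^{(0)}=0$ and therefore $\bar{\nabla}\bar{R}=\bar{\nabla}\bar{S}$. Setting $\bar{g}^T=\bar{g}-dr\otimes dr$, the correction admits the coordinate-invariant expression
\[
\bar{S}=A_1(r)\,\bar{g}^T\odot\bar{g}^T + A_2(r)\,\bar{g}^T\odot(dr\otimes dr),
\]
with $A_1,A_2=O(\phi^{-1-n})$ given by the computation above. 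Using the warped-product Hessian identity $\bar{\nabla}^2 r=(\phi'/\phi)\bar{g}^T$, both $\bar{\nabla}\bar{g}^T$ and $\bar{\nabla}(dr\otimes dr)$ are bounded uniformly in $r$, while $A_j'(r)$ is again $O(\phi^{-1-n})$. Consequently $\bar{\nabla}\bar{S}=O(\phi^{-1-n})=O(e^{-(n+1)r})$, which is the desired estimate.

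The main obstacle is the third step. A direct component computation of $\bar{\nabla}_\rho\bar{R}_{\alpha\beta\gamma\mu}$ in the orthonormal frame produces terms of order $\phi'/\phi=O(1)$ through the connection acting on the basis, and one must check that these terms cancel at leading order before picking up the decaying factors from $\bar{R}-\bar{R}^{(0)}$. Extracting $\bar{R}^{(0)}$ first makes this cancellation automatic, reducing the problem to bounding the covariant derivative of the manifestly $O(e^{-(n+1)r})$ tensor $\bar{S}$, where each derivative either acts on a coefficient of order $\phi^{-1-n}$ or on a tensor with bounded covariant derivative.
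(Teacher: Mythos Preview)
The paper does not actually prove this lemma; it is stated without proof as a standard fact about the anti-de Sitter--Schwarzschild background (the preceding sectional curvature formulas are attributed to \cite{BHW}, and the asymptotics of $\phi$ and of $\bar{R}$ are then recorded as known). So there is no ``paper's proof'' to compare against, and the relevant question is simply whether your argument is sound. It is.

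Your three steps are all correct. For the expansion of $\phi$, separating variables in $\phi'=\sqrt{1+\phi^2-m\phi^{1-n}}$ and expanding the integrand for large $\tau$ is exactly the right computation; the one subtlety you handle by ``absorbing the constants into a translation of the origin in $r$'' is legitimate, since the paper never fixes the additive normalization of $r$ and only uses the large-$r$ behavior. For the curvature, your reduction to the two scalar sectional curvatures $\frac{1-(\phi')^2}{\phi^2}=-1+m\phi^{-1-n}$ and $-\frac{\phi''}{\phi}=-1-\tfrac{m(n-1)}{2}\phi^{-1-n}$ via Lemma~\ref{ambient curvature} is clean and matches how these quantities are used later in the paper (e.g.\ in (\ref{6.9}), (\ref{6.10})). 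For $\bar{\nabla}\bar{R}$, subtracting the parallel tensor $\bar{R}^{(0)}=-\bar{g}\owedge\bar{g}$ (in your notation, $-\bar{g}_{\alpha\gamma}\bar{g}_{\beta\mu}+\bar{g}_{\alpha\mu}\bar{g}_{\beta\gamma}$) is the efficient way to isolate the decaying remainder $\bar{S}$; your observation that $\bar{\nabla}^2 r=(\phi'/\phi)\bar{g}^T$ with $\phi'/\phi$ bounded ensures that the covariant derivatives of $\bar{g}^T$ and $dr\otimes dr$ stay bounded, so the only decaying factors come from $A_j$ and $A_j'$, both $O(\phi^{-1-n})$. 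This last device---passing to $\bar{S}=\bar{R}-\bar{R}^{(0)}$ so that the $O(1)$ connection terms act only on an already $O(e^{-(n+1)r})$ tensor---is precisely the cancellation one would otherwise have to verify component by component, and it makes your argument more transparent than a direct frame computation.
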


We also need the following two lemmas regarding to $\sigma_k$. These two lemmas are well known, for completeness, we add the proof here.
\begin{lemm}\label{sigma_k 1}
let $F=n\frac{C^{k-1}_n}{C^k_n}\frac{ \sigma_k}{\sigma_{k-1}}$, thus $F$ is of homogeneous degree $1$, and $F(I)=n$, then we have
\begin{align*}
\sum_i F^{ii}\lambda_i^2\geq \frac{F^2}{n}
\end{align*}
\end{lemm}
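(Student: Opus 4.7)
The strategy is to rewrite $\sum_i F^{ii}\lambda_i^2$ in closed form using only elementary symmetric polynomials; the desired inequality then reduces to a classical Newton--MacLaurin inequality. Writing $F = c\,\sigma_k/\sigma_{k-1}$ with $c = nk/(n-k+1)$, the quotient rule gives
\begin{align*}
F^{ii} = c \cdot \frac{\sigma_k^{ii}\sigma_{k-1}-\sigma_k\sigma_{k-1}^{ii}}{\sigma_{k-1}^2},
\end{align*}
so everything reduces to computing $\sum_i \sigma_k^{ii}\lambda_i^2$ and $\sum_i \sigma_{k-1}^{ii}\lambda_i^2$.

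To evaluate these, I would use $\sigma_k^{ii} = \sigma_{k-1}(\lambda|i)$ (where $\lambda|i$ denotes $\lambda$ with $\lambda_i$ removed) together with the identity $\lambda_i\sigma_{k-1}(\lambda|i) = \sigma_k - \sigma_k(\lambda|i)$: multiplying by $\lambda_i$ and summing over $i$, and using $\sum_i \lambda_i\sigma_k(\lambda|i) = (k+1)\sigma_{k+1}$, I obtain
\begin{align*}
\sum_i \sigma_k^{ii}\lambda_i^2 = \sigma_1\sigma_k - (k+1)\sigma_{k+1},
\end{align*}
and analogously $\sum_i \sigma_{k-1}^{ii}\lambda_i^2 = \sigma_1\sigma_{k-1} - k\sigma_k$. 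Substituting both expressions, the $\sigma_1$-terms cancel and
\begin{align*}
\sum_i F^{ii}\lambda_i^2 = c\,\frac{k\sigma_k^2 - (k+1)\sigma_{k-1}\sigma_{k+1}}{\sigma_{k-1}^2}.
\end{align*}

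Since $F^2/n = c k\sigma_k^2/((n-k+1)\sigma_{k-1}^2)$, the inequality $\sum_i F^{ii}\lambda_i^2 \geq F^2/n$ rearranges to
\begin{align*}
(k+1)(n-k+1)\,\sigma_{k-1}\sigma_{k+1} \leq k(n-k)\,\sigma_k^2,
\end{align*}
which is exactly the Newton--MacLaurin inequality $p_{k-1}p_{k+1} \leq p_k^2$ for the normalized polynomials $p_j = \sigma_j/\binom{n}{j}$; this is valid for all real $\lambda$, and in particular in $\Gamma_k$ where $\sigma_{k-1} > 0$. The computation is essentially bookkeeping; the only point requiring attention is the cancellation of the $\sigma_1$-terms, which signals that the correct inequality to invoke is Newton's on the triple $(\sigma_{k-1},\sigma_k,\sigma_{k+1})$ rather than on $(\sigma_{k-2},\sigma_{k-1},\sigma_k)$ (the latter would point in the wrong direction).
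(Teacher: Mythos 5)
Your proof is correct and follows essentially the same route as the paper: compute $\sum_i F^{ii}\lambda_i^2$ in closed form via $\sum_i\sigma_l^{ii}\lambda_i^2=\sigma_1\sigma_l-(l+1)\sigma_{l+1}$, observe the $\sigma_1$-cancellation, and apply the Newton--MacLaurin inequality on the triple $(\sigma_{k-1},\sigma_k,\sigma_{k+1})$. Your rearranged form $(k+1)(n-k+1)\sigma_{k-1}\sigma_{k+1}\le k(n-k)\sigma_k^2$ is exactly the inequality the paper invokes in the step $\frac{k\sigma_k^2-(k+1)\sigma_{k-1}\sigma_{k+1}}{\sigma_{k-1}^2}\ge\frac{k\sigma_k^2}{(n-k+1)\sigma_{k-1}^2}$.
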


\begin{proof}
We first consider the term $\sigma_l^{ii}\lambda_i^2$, we have
\begin{align}\label{sigma_k}
\sigma_l^{ii}\lambda_i^2=\sigma_1\sigma_l-(l+1)\sigma_{l+1}
\end{align}

Let $G=\frac{\sigma_k}{\sigma_{k-1}}$,  by (\ref{sigma_k}) and Newton-Mclaraun ineqaulity, we have
\begin{align*}
\sum_i G^{ii}\lambda_i^2&=\sum_i\left(\frac{\sigma_k^{ii}}{\sigma_{k-1}}-\frac{\sigma_k\sigma_{k-1}^{ii}}{\sigma_{k-1}^2}\right)\lambda_i^2\\
&=\frac{\sigma_1\sigma_k-(k+1)\sigma_{k+1}}{\sigma_{k-1}}-\frac{\sigma_k\left(\sigma_1\sigma_{k-1}-k\sigma_{k}\right)}{\sigma_{k-1}^2}\\
&=\frac{k\sigma_k^2-(k+1)\sigma_{k-1}\sigma_{k+1}}{\sigma_{k-1}^2}\\
&\geq\frac{k\sigma_k^2}{(n-k+1)\sigma_{k-1}^2}\\
&=\frac{C^{k-1}_n}{C^k_n}\left(\frac{\sigma_k}{\sigma_{k-1}}\right)^2
\end{align*}
thus
\begin{align*}
\sum_i F^{ii}\lambda_i^2\geq n\left(\frac{C^{k-1}_n}{C^k_n}\right)^2\left(\frac{\sigma_k}{\sigma_{k-1}}\right)^2=\frac{F^2}{n}
\end{align*}

\end{proof}

\begin{lemm}\label{sigma_k 2}
Let $F=n\frac{C^{k-1}_n}{C^k_n}\frac{\sigma_k}{\sigma_{k-1}}$ and $(\lambda_i)\in \Gamma_k$, then 
\begin{align*}
n\leq \sum_i F^{ii}\leq nk
\end{align*}
\end{lemm}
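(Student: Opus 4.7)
The plan is to compute the trace $\sum_i F^{ii}$ in closed form and then sandwich the resulting expression using two elementary facts about elementary symmetric polynomials: nonnegativity on $\Gamma_k$ (for the upper bound) and the Newton--Maclaurin inequality (for the lower bound).

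Setting $G=\sigma_k/\sigma_{k-1}$, so that $F = n\frac{C^{k-1}_n}{C^k_n} G$, I would first apply the quotient rule to obtain $G^{ii} = \sigma_k^{ii}/\sigma_{k-1} - \sigma_k\,\sigma_{k-1}^{ii}/\sigma_{k-1}^2$ and then sum in $i$, invoking the classical identity $\sum_i \sigma_\ell^{ii} = (n-\ell+1)\sigma_{\ell-1}$. This yields
\begin{align*}
\sum_i G^{ii} \;=\; (n-k+1) \;-\; (n-k+2)\,\frac{\sigma_k\,\sigma_{k-2}}{\sigma_{k-1}^2},
\end{align*}
with the convention $\sigma_{-1}=0$ that handles the edge case $k=1$ harmlessly.

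For the upper bound, I would use that $(\lambda_i)\in\Gamma_k\subset\Gamma_{k-2}$ forces both $\sigma_k$ and $\sigma_{k-2}$ to be positive, so the correction term is nonnegative and $\sum_i G^{ii}\leq n-k+1$. Multiplying through by $n C^{k-1}_n/C^k_n = nk/(n-k+1)$ gives $\sum_i F^{ii}\leq nk$. For the lower bound, I would invoke Newton--Maclaurin in the form
\begin{align*}
\frac{\sigma_k\,\sigma_{k-2}}{\sigma_{k-1}^2} \;\leq\; \frac{C^k_n\,C^{k-2}_n}{(C^{k-1}_n)^2} \;=\; \frac{(k-1)(n-k+1)}{k(n-k+2)},
\end{align*}
which is the same tool used in the preceding lemma. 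Substituting this bound collapses the right-hand side to $(n-k+1)/k$, whence $\sum_i F^{ii}\geq n$ after multiplying by $nk/(n-k+1)$.

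The argument is essentially bookkeeping, and I do not anticipate any real obstacle. The only subtlety worth double-checking is the inclusion $\Gamma_k \subset \Gamma_{k-2}$, which is needed to ensure $\sigma_{k-2}>0$ for the upper bound and is immediate from the definition of Garding's cone. As an aside, one could alternatively derive the lower bound from concavity of $F$ together with homogeneity (Euler's identity gives $F(I)\leq F(\lambda)+\sum_i F^{ii}(\lambda)(1-\lambda_i)=\sum_i F^{ii}$, hence $\sum_i F^{ii}\geq n$), but the direct computation outlined above delivers both bounds in a uniform way.
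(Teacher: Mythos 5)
Your proof is correct and follows the same route as the paper: compute $\sum_i G^{ii}=(n-k+1)-(n-k+2)\sigma_k\sigma_{k-2}/\sigma_{k-1}^2$, then use Newton--Maclaurin for the lower bound and positivity of $\sigma_k\sigma_{k-2}$ on $\Gamma_k$ for the upper bound, finally rescaling by $nC^{k-1}_n/C^k_n=nk/(n-k+1)$. The aside via concavity and Euler's identity is a nice alternative for the lower bound, but the main computation is identical to the paper's.
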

\begin{proof}
Let $G=\frac{\sigma_k}{\sigma_{k-1}}$, we have
\begin{align*}
\sum_i G^{ii}&=\sum_i\left(\frac{\sigma_k^{ii}}{\sigma_{k-1}}-\frac{\sigma_k\sigma_{k-1}^{ii}}{\sigma_{k-1}^2}\right)\\
&=(n-k+1)-(n-k+2)\frac{\sigma_k\sigma_{k-2}}{\sigma_{k-1}^2}\\
&\geq \frac{n-k+1}{k}
\end{align*}
by Newton-Mclaraun inequality.

For the second inequality,
\begin{align*}
\sum_i G^{ii}&=\sum_i\left(\frac{\sigma_k^{ii}}{\sigma_{k-1}}-\frac{\sigma_k\sigma_{k-1}^{ii}}{\sigma_{k-1}^2}\right)\\
&=(n-k+1)-(n-k+2)\frac{\sigma_k\sigma_{k-2}}{\sigma_{k-1}^2}\\
&\leq n-k+1
\end{align*}
as $(\lambda_i)\in \Gamma_k$. The lemma then follows.
\end{proof}

Since the initial hypersurface is star-shaped, we can consider it as a graph on $\mathbb{S}^n$, i.e. $X=(x,r)$ where $x$ is the coordinate on $\mathbb{S}^n$, $r$ is the radius, by taking derivatives, we have
\begin{align}\label{e_i}
&X_i=\partial_i+r_i\partial_r\\\nonumber
&g_{ij}=r_ir_j+\phi^2\sigma_{ij}
\end{align}
and
\begin{align}\label{v}
\nu=\frac{1}{v}\left (-\frac{r^i}{\phi^2}\partial_i+\partial_r\right )
\end{align}
where $\nu$ is the unit normal vector, $v=(1+\frac{|\nabla r|^2}{\phi^2})^{\frac{1}{2}}$, note that all the derivatives are on $\mathbb{S}^n$.

Thus
\begin{align*}
\frac{dr}{dt}=\frac{1}{Fv}, \dot{x}^i=-\frac{r^i}{\phi^2Fv}
\end{align*}
we have
\begin{align}\label{4.1}
\frac{\partial r}{\partial t}=\frac{dr}{dt}-r_j \dot{x}^j=\frac{v}{F}
\end{align}

By a direct computation, c.f. (2.6) in \cite{D} we have
\begin{align}\label{4.2}
h_{ij}=\frac{1}{v}(-r_{ij}+\phi\phi^\prime\sigma_{ij}+\frac{2\phi^\prime r_ir_j}{\phi})
\end{align}

Now we consider a function 
\begin{align}
\varphi=\int_{r_0}^r\frac{1}{\phi}
\end{align}
thus
\begin{align}\label{4.3}
\varphi_i=\frac{r_i}{\phi}, \varphi_{ij}=\frac{r_{ij}}{\phi}-\frac{\phi^\prime r_ir_j}{\phi^2}.
\end{align}
If we write everything in terms of $\varphi$, we have
\begin{align}\label{derivative of varphi}
\frac{\partial \varphi}{\partial t}=\frac{v}{\phi F}
\end{align}
and
\begin{align}
v=(1+|D\varphi|^2)^{\frac{1}{2}},g_{ij}=\phi^2(\varphi_i\varphi_j+\sigma_{ij}), g^{ij}=\phi^{-2}\left(\sigma^{ij}-\frac{\varphi^i\varphi^j}{v^2}\right).
\end{align}

Moreover, 
\begin{align}\label{4.6}
h_{ij}&=\frac{\phi}{v}\left( \phi^\prime(\sigma_{ij}+\varphi_i\varphi_j)-\varphi_{ij}\right),\\\nonumber
h^i_j&=g^{ik}h_{kj}=\frac{\phi^\prime}{\phi v}\delta^i_j-\frac{1}{\phi v}\tilde{\sigma}^{ik}\varphi_{kj}
\end{align}
where $\tilde{\sigma}^{ij}=\sigma^{ij}-\frac{\varphi^i\varphi^j}{v^2}$.

We now give the $C^0$ estimate.
\begin{lemm}\label{C^0 estimate}
Let $\bar{r}(t)=\sup_{\mathbb{S}^n}r(\cdot,t)$ and $\underline{r}(t)=\inf_{\mathbb{S}^n}r(\cdot,t )$, then we have
\begin{align}
&\phi(\bar{r}(t))\leq e^{t/n}\phi(\bar{r}(0))\\\nonumber
&\phi(\underline{r}(t))\geq e^{t/n}\phi(\underline{r}(0))
\end{align}
\end{lemm}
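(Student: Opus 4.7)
The plan is to apply the parabolic maximum principle to $r$ as a function on $\mathbb{S}^n\times[0,T)$ and convert the resulting pointwise bound on $\partial_t r$ into an ODE for $\phi(\bar r(t))$.

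First I would fix a time $t$ and let $x_0\in\mathbb{S}^n$ be a point where $\bar r(t)=r(x_0,t)$ is attained. At $x_0$ the spherical gradient $r_i$ vanishes, so from the definition of $v$ in (\ref{v}) we have $v=1$ there, and the spherical Hessian satisfies $r_{ij}\le 0$. Substituting into (\ref{4.2}),
\begin{align*}
h_{ij}=-r_{ij}+\phi\phi'\sigma_{ij}\ge \phi\phi'\sigma_{ij},
\end{align*}
while $g_{ij}=\phi^2\sigma_{ij}$ since $r_i=0$. Hence every principal curvature $\lambda_i$ of $\Sigma_t$ at $x_0$ satisfies $\lambda_i\ge \phi'(\bar r)/\phi(\bar r)$.

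Next I would invoke the monotonicity and homogeneity of $F$. Because $F=n\,\frac{C_n^{k-1}}{C_n^k}\,\frac{\sigma_k}{\sigma_{k-1}}$ is monotone increasing in each argument on $\Gamma_k$, is homogeneous of degree one, and satisfies $F(I)=n$, the pointwise lower bound on the $\lambda_i$ gives
\begin{align*}
F\big(\lambda_1,\dots,\lambda_n\big)\ \ge\ F\!\left(\tfrac{\phi'}{\phi},\dots,\tfrac{\phi'}{\phi}\right)=n\,\frac{\phi'(\bar r)}{\phi(\bar r)}.
\end{align*}
Combining with (\ref{4.1}) and the envelope theorem (standard for Lipschitz sup/inf of smooth families), I obtain
\begin{align*}
\frac{d\bar r}{dt}\ =\ \frac{v}{F}\bigg|_{x_0}\ \le\ \frac{\phi(\bar r)}{n\,\phi'(\bar r)}.
\end{align*}
Multiplying by $\phi'(\bar r)$ yields $\frac{d}{dt}\phi(\bar r)\le \frac{1}{n}\phi(\bar r)$, and Gronwall's inequality gives $\phi(\bar r(t))\le e^{t/n}\phi(\bar r(0))$.

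The lower bound for $\underline r$ is completely symmetric: at a minimum point $r_i=0$, $v=1$, and $r_{ij}\ge 0$, so $h_{ij}\le \phi\phi'\sigma_{ij}$ and therefore $\lambda_i\le \phi'/\phi$, which by monotonicity of $F$ forces $F\le n\phi'(\underline r)/\phi(\underline r)$, reversing the inequality. I do not foresee a substantive obstacle here; the only subtlety is justifying the differentiation of the sup/inf, which is handled by the standard fact that Lipschitz envelopes of smooth functions are differentiable a.e.\ with derivative given by the value of $\partial_t r$ at any extremal point, enough to integrate the inequality. Everything else is the pointwise computation above together with the properties of $F$ already recorded in Lemma~\ref{sigma_k 1} and Lemma~\ref{sigma_k 2}.
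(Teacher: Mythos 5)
Your proposal is correct and follows essentially the same route as the paper: locate the spatial max of $r$, use $r_i=0$, $r_{ij}\le 0$ there to get a pointwise lower bound $h^i_j\ge\frac{\phi'}{\phi}\delta^i_j$, invoke monotonicity and degree-one homogeneity of $F$ to obtain $F\ge n\phi'/\phi$, and then integrate the resulting ODE for $\log\phi(\bar r)$; the only cosmetic difference is that the paper passes through the $\varphi$-variables via (\ref{4.3}) and (\ref{4.6}) while you work directly with (\ref{4.2}), and you explicitly flag the Hamilton-type envelope argument justifying $\frac{d}{dt}\bar r$ that the paper leaves tacit.
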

\begin{proof}
Recall that $\frac{\partial r}{\partial t}=\frac{v}{F}$, where $F$ is a normalized operator on $(h^i_j)$. At the point where the function $r(\cdot, t)$ attains its maximum, we have $\nabla r=0, (r_{ij})\leq 0$, from (\ref{4.3}), we deduce that $\nabla \varphi=0, (\varphi_{ij})\leq 0$ at the maximum point. From (\ref{4.6}), we have $(h^i_j)\geq \left(\frac{\phi^\prime}{\phi }\delta^i_j\right)$, where we may assume $(g_{ij})$ and $(h_{ij})$ is diagonalized if necessary. Since $F$ is homogeneous of degree $1$, and $F(1,\cdots,1)=n$, we have
\begin{align*}
v^2=1+|\nabla\varphi|^2=1, F(h^i_j)\geq \frac{\phi^\prime}{\phi }F(\delta^i_j)=\frac{n\phi^\prime}{\phi },
\end{align*}
thus
\begin{align*}
\frac{d}{dt}\bar{r}(t)\leq \frac{\phi(\bar{r}(t))}{n\phi^\prime (\bar{r}(t))}
\end{align*}
i.e.
\begin{align*}
\frac{d}{dt}\log\phi(\bar{r}(t))\leq \frac{1}{n}
\end{align*}
which yields to the first inequality. Similarly, we can prove the second inequality, thus we have the lemma.
\end{proof}

\section{Evolution equations and $C^1$ estimate}
Before we go on with the estimate, let's derive some evolution equations first. 
\begin{align}
\dot{g}_{ij}=\frac{2h_{ij}}{F},\quad \dot{\nu}=\frac{g^{ij}F_ie_j}{F^2}
\end{align}

\begin{align}\label{6.5}
\dot{h}^i_j=-\frac{1}{F}h^{i}_kh^k_{j}-\nabla^i\nabla_j\left(\frac{1}{F}\right)-\frac{1}{F}\bar{R}^i_{\nu j\nu}
\end{align}

Together with the interchanging formula (\ref{interchanging formula}), we have
\begin{align}\label{6.6}
\dot{h}^i_j&=-\frac{1}{F}h^i_kh^k_j+ \frac{F^{pq,rs}{h_{pq}}^ih_{rsj}}{F^2}  -\frac{2F^{pq}{h_{pq}}^iF^{rs}h_{rsj}}{F^3}-\frac{1}{F}\bar{R}^i_{\nu j\nu}\\\nonumber
&+\frac{g^{ki}F^{pq}}{F^2} \big( h_{kj,pq}-h^m_{q}(h_{km}h_{pj}-h_{kj}h_{mp})-h^m_{j}(h_{mk}h_{pq}-h_{kq}h_{mp})\\\nonumber
&+h^m_{q}\bar{R}_{kpjm}+h^m_{j}\bar{R}_{kpqm}+\nabla_p\bar{R}_{kjq\nu}+\nabla_k\bar{R}_{jpq\nu} \big)
\end{align}
where $F^{ij}=\frac{\partial F}{\partial h_{pq}}$ and $F^{pq,rs}=\frac{\partial^2 F}{\partial h_{pq}\partial h_{rs}}$.

For later purpose, we consider the function $u=\left \langle \phi \partial_r, \nu\right \rangle=\frac{\phi}{v}$, which can be seen as the support function. We derive the following equation.

\begin{align}\label{support derivative}
\dot{u}&=\frac{\phi^\prime}{F}+\frac{\phi  g^{ij}F_ir_j}{F^2}
\end{align}

Now, we need to consider the curvature term. By Lemma \ref{ambient curvature}, (\ref{e_i}) and (\ref{v}), we have

\begin{align}\label{6.9}
\bar{R}_{k \nu j\nu }&=\left(\frac{1}{v^2}\delta_{kj}+\frac{2r_kr_j}{\phi^2v^2}+\frac{r_kr_j|\nabla r|^2}{\phi^4v^2}\right)(-\phi\phi^{\prime\prime})+\frac{(|\nabla r|^2\delta_{kj}-r_kr_j)}{\phi^2v^2}(1-{\phi^\prime}^2)
\end{align}

\begin{align*}
\bar{R}_{\nu jnk}&=\frac{r_n \delta_{jk}}{v}\left(-\phi\phi^{\prime\prime}-(1-(\phi^\prime)^2)\right)+\frac{r_k \delta_{jn}}{v}\left(\phi\phi^{\prime\prime}+(1-{\phi^\prime}^2)\right)
\end{align*}

Note that $g^{mn}=\phi^{-2}\left(\sigma_{mn}-\frac{r^mr^n}{v^2\phi^2}\right)$, thus
\begin{align}\label{6.10}
g^{mn}\nabla_m\Phi\bar{R}_{\nu jnk}&=\left(\frac{|\nabla r|^2\delta_{jk}-r_jr_k}{\phi v^3}\right)\left(-\phi\phi^{\prime\prime}-(1-{\phi^\prime}^2)\right)
\end{align}

\begin{lemm}\label{time derivative}
Along the flow, $|\dot{\varphi}|\leq C$, where $C$ depends on $\Sigma_0, n, k$.
\end{lemm}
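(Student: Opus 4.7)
From (\ref{derivative of varphi}), $\dot\varphi = v/(\phi F) > 0$ on a $k$-convex graph, so only the upper bound $\dot\varphi \leq C$ is at stake. Since the ambient metric is $t$-independent, the flow for the radial height is the scalar parabolic equation
\begin{equation*}
\varphi_t = G(\varphi, D\varphi, D^2\varphi), \qquad G = \frac{v}{\phi F},
\end{equation*}
on $\mathbb{S}^n \times [0,T)$, where all the $\varphi$-dependence of $G$ enters through $\phi$ and $\phi'$ (via $dr/d\varphi = \phi$) in the warping factor and in $h^i_j$ from (\ref{4.6}). My plan is to apply the parabolic maximum principle to $\dot\varphi$ itself by differentiating this equation in $t$.

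Differentiating yields the linearization
\begin{equation*}
\partial_t \dot\varphi = G_\varphi\,\dot\varphi + G^{p^i}\partial_i\dot\varphi + G^{r^{ij}}\partial_i\partial_j\dot\varphi,
\end{equation*}
in which the symbol $G^{r^{ij}}$ is positive definite by the ellipticity of $F$ on $\Gamma_k$. At a spatial maximum of $\dot\varphi$, the first-order term vanishes and the second-order term is nonpositive, so one is reduced to $\partial_t\dot\varphi \leq G_\varphi \cdot \dot\varphi$. The heart of the argument is to compute $G_\varphi$. Starting from (\ref{4.6}) and using $dr/d\varphi = \phi$, a direct calculation gives
\begin{equation*}
(h^i_j)_\varphi = \frac{\phi''}{v}\,\delta^i_j - \phi'\,h^i_j;
\end{equation*}
invoking Euler's identity $F^i_j h^j_i = F$ for the $1$-homogeneous $F$, the $v\phi'/(\phi F)$ contributions cancel and one is left with
\begin{equation*}
G_\varphi = -\frac{\phi''\sum_i F^i_i}{\phi F^2}.
\end{equation*}

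In anti-de Sitter-Schwarzschild, differentiating $(\phi')^2 = 1 - m\phi^{1-n} + \phi^2$ shows $\phi'' = \phi + (n-1)m/(2\phi^n) > 0$, and Lemma \ref{sigma_k 2} gives $\sum_i F^i_i > 0$. Hence $G_\varphi \leq 0$, so at any spatial maximum $\partial_t\dot\varphi \leq 0$. The parabolic maximum principle then shows that $\max_{\mathbb{S}^n}\dot\varphi(\cdot,t)$ is nonincreasing, whence $\dot\varphi \leq \max_{\mathbb{S}^n}\dot\varphi(\cdot,0) =: C$ with $C$ depending only on $\Sigma_0, n, k$, and $|\dot\varphi| \leq C$ follows from $\dot\varphi > 0$. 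The step I expect to require the most care is the cancellation in $G_\varphi$: several intermediate terms involving $\phi'/\phi$ and $(\phi')^2/\phi$ arise, but they collapse cleanly because of the $1$-homogeneity of $F$, leaving a single term whose sign is fixed by the convexity of the warping function $\phi$ in anti-de Sitter-Schwarzschild.
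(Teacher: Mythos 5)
Your proof is correct and follows essentially the same approach as the paper: you differentiate the scalar flow equation in $t$ to obtain a linear parabolic equation for $\dot\varphi$ and apply the maximum principle, with the decisive observation that the zeroth-order coefficient collapses (via $1$-homogeneity of $F$) to $-\phi''\sum_i F^i_i/(\phi F^2)\leq 0$. The only cosmetic difference is that you write $\dot\varphi = G$ with $G = v/(\phi F)$, whereas the paper writes $\dot\varphi = 1/G$ with $G$ the reciprocal operator; the resulting linearization and sign analysis are identical.
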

\begin{proof}
By (\ref{derivative of varphi}) and (\ref{4.6}), we have
\begin{align*}
\frac{\partial \varphi}{\partial t}=\frac{v^2}{F(\phi^\prime \delta_{ij}-\tilde{\sigma}^{ik}\varphi_{kj})}=\frac{1}{G}
\end{align*}
Let $G^{ij}=\frac{\partial G}{\partial \varphi_{ij}}$, $G^k=\frac{\partial G}{\partial \varphi_k}$, then
\begin{align*}
G^{ij}=-\frac{1}{v^2}F^{i}_l\tilde{\sigma}^{lj}
\end{align*}
thus
\begin{align*}
\frac{\partial \dot{\varphi}}{\partial t}=-\frac{\dot{G}}{G^2}=\frac{1}{v^2G^2}\left(F^{i}_l\tilde{\sigma}^{lj}\dot{\varphi}_{ij}-v^2 G^k\dot{\varphi}_k-F^{i}_i\phi\phi^{\prime\prime} \dot{\varphi}\right)
\end{align*}
By maximum principle, we conclude that $|\dot{\varphi}|$ is bounded above.
\end{proof}

\begin{lemm}\label{C^1 estimate}
Along the flow, $|\nabla \varphi|\leq C$, where $C$ depends on $\Sigma_0,n,k$. In addition, if $F$ is bounded above, we have $|\nabla \varphi|\leq Ce^{-\alpha t}$, where $\alpha$ depends on $\sup F$ and $n$.
\end{lemm}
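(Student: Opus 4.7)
Both estimates come from a parabolic maximum principle applied to $\omega := v^2 = 1+|\nabla\varphi|^2$, regarded as a function on $\mathbb{S}^n\times[0,T)$ and evolved via the scalar PDE $\partial_t\varphi = 1/G$ from the proof of Lemma \ref{time derivative}. The linearization
\begin{align*}
L\psi := \partial_t\psi - \mathcal{L}\psi,\qquad \mathcal{L}\psi := \frac{F^i_l\tilde\sigma^{lj}}{G^2v^2}\,\nabla_i\nabla_j\psi + (\text{first order})\psi,
\end{align*}
is uniformly parabolic since $F^i_l$ and $\tilde\sigma^{lj}$ are positive definite on $\Gamma_k$.

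The first step is to compute $Lv^2$. Differentiating $v^2=1+\sigma^{ij}\varphi_i\varphi_j$ in $t$ and using $\dot\varphi=v/(\phi F)$ together with $\nabla_j\phi=\phi\phi'\varphi_j$, I obtain
\begin{align*}
\partial_tv^2 = \frac{\varphi^j\nabla_jv^2}{v\phi F} - \frac{2v\phi'(v^2-1)}{\phi F} - \frac{2v\varphi^j\nabla_jF}{\phi F^2},
\end{align*}
while $\mathcal{L}v^2$ is computed from $\nabla_{cd}v^2 = 2\sigma^{ab}\varphi_{ac}\varphi_{bd} + 2\sigma^{ab}\varphi_a\varphi_{bcd}$, commuting the third derivatives via the Ricci identity on the constant-curvature sphere. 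The algebraic identity $\tilde\sigma^{ab}\varphi_b=\varphi^a/v^2$ makes the $\nabla F$ contributions of $\partial_tv^2$ and of $\mathcal{L}v^2$ cancel exactly; the first-order piece $\propto\nabla v^2$ drops at any spatial maximum. After using $F^i_i\leq nk$ (Lemma \ref{sigma_k 2}) to control the residual sphere-curvature term, one is left with
\begin{align*}
Lv^2 \leq -\frac{2\phi'}{\phi F}(v^2-1) + \frac{C_0}{\phi^2}\,v^2
\end{align*}
at any spatial maximum of $v^2$.

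The second step is the ODE comparison. By Lemma \ref{C^0 estimate}, $\phi(\underline r(t))\geq c_0 e^{t/n}$, and by Lemma \ref{ADS curvature}, $\phi'/\phi\to 1$ uniformly. Applying the parabolic maximum principle to $\omega_{\max}(t):=\max_{\mathbb{S}^n}v^2(\cdot,t)$ yields
\begin{align*}
\frac{d}{dt}\omega_{\max}(t) \leq -c_1(t)\bigl(\omega_{\max}(t)-1\bigr) + Ce^{-2t/n},
\end{align*}
with $c_1(t)\geq \min_x\bigl[\phi'/(\phi F)\bigr]>0$. Positivity of $c_1(t)$ is already enough to keep $\omega_{\max}$ uniformly bounded in time, giving the first bound $|\nabla\varphi|\leq C$. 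Under the additional hypothesis $F\leq \sup F$, we have $c_1(t)\geq 2\alpha$ for all large $t$ with $\alpha=\alpha(n,\sup F)>0$, and integrating the ODE yields $\omega_{\max}-1\leq Ce^{-2\alpha t}$, hence $|\nabla\varphi|\leq Ce^{-\alpha t}$.

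The main obstacle is the cancellation of the gradient-of-$F$ terms in computing $Lv^2$; it forces the use of the precise linearization $G^{ij}=-F^q_i\tilde\sigma^{ip}/v^2$ together with $\tilde\sigma^{ij}\varphi_j=\varphi^i/v^2$, and it is this algebraic cancellation that makes $v^2$ (rather than $v$ or $\log v$) the right auxiliary function.
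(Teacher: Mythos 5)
Your proposal follows the paper's strategy closely: same auxiliary function (the paper uses $\omega=\tfrac12|\nabla\varphi|^2$ rather than $v^2=1+|\nabla\varphi|^2$, but that is a trivial normalization), same scalar PDE $\partial_t\varphi=1/G$, same linearization $G^{ij}=-v^{-2}F^i_l\tilde\sigma^{lj}$, same commutation of the third-derivative terms via the Ricci identity on $\mathbb S^n$, and a parabolic maximum principle with an exponential test factor. However, there is a genuine computational error in your key differential inequality
\begin{align*}
Lv^2 \leq -\frac{2\phi'}{\phi F}\bigl(v^2-1\bigr) + \frac{C_0}{\phi^2}\,v^2,
\end{align*}
and the error lies exactly in the term you are relying on for the decay rate. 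The coefficient $-\frac{2v\phi'(v^2-1)}{\phi F}$ in your formula for $\partial_t v^2$ comes from differentiating the explicit $1/\phi$ factor in $\dot\varphi=v/(\phi F)$. But $F=F(h^i_j)$ carries its own dependence on $\phi$ and $\phi'$ through $h^i_j=\frac{\phi'}{\phi v}\delta^i_j-\frac{1}{\phi v}\tilde\sigma^{ik}\varphi_{kj}$, so $\varphi^j\nabla_j F$ also produces a $\phi'$ contribution, and a short computation (using Euler's relation $F^i_jh^j_i=F$ and $\tilde\sigma^{qk}\varphi_{kp}=-\phi v h^q_p+\phi'\delta^q_p$) shows that this contribution \emph{cancels} your $-\frac{2v\phi'(v^2-1)}{\phi F}$ term exactly. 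The surviving negative term is not a $\phi'$-term at all but a $\phi''$-term, namely $-\frac{2F^i_i\,\phi''}{\phi F^2}\,(v^2-1)$, which is the $-2F^i_i\phi\phi''\omega/(v^2G^2)$ appearing in the paper. This changes both the functional form (it carries $F^i_i/F^2$, not $1/F$) and the origin of the decay (it comes from $\phi''>\phi>0$, not $\phi'$). Cleanest is to write the whole thing in terms of $G=\frac{F(\phi'\delta^i_j-\tilde\sigma^{ik}\varphi_{kj})}{v^2}$, whose \emph{only} explicit $\varphi$-dependence is through $\phi'$, so $G_\varphi=\frac{F^i_i\phi''\phi}{v^2}$ is transparent and no hidden cancellation needs to be tracked.

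There is a second, more minor, slip: the residual sphere-curvature term left over after the Ricci commutation is $-F^i_i|\nabla\varphi|^2+F^i_l\varphi_i\varphi^l$, which is $\leq 0$ because the maximal eigenvalue of $(F^i_l)$ is bounded by its trace. So there is no spurious $+C_0v^2/\phi^2$ remainder, and in particular you do not need the $C^0$ estimate $\phi\geq c_0e^{t/n}$ or any integrability argument to get the first conclusion: at a spatial maximum of $\omega$ one has $\partial_t\omega\leq -\frac{2F^i_i\phi''\omega}{\phi F^2}\leq 0$ outright, so $\omega_{\max}$ is monotone nonincreasing. Your final conclusion (exponential decay with $\alpha=\alpha(\sup F,n)$) is still of the right type, but with the wrong $\alpha$ and via an argument whose central inequality is not correct as stated; the fix is to track the $\phi''$-term as above and invoke $\sum_iF^{ii}\geq n$ (Lemma \ref{sigma_k 2}) together with $\phi''\geq\phi$ to obtain $\alpha\leq n/\sup^2F$.
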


\begin{proof}
By (\ref{derivative of varphi}) and (\ref{4.6}), we have
\begin{align*}
\frac{\partial \varphi}{\partial t}=\frac{v^2}{F(\phi^\prime \delta_{ij}-\tilde{\sigma}^{ik}\varphi_{kj})}=\frac{1}{G}
\end{align*}
Let $G^{ij}=\frac{\partial G}{\partial \varphi_{ij}}$, $G^k=\frac{\partial G}{\partial \varphi_k}$, then
\begin{align*}
G^{ij}=-\frac{1}{v^2}F^{i}_l\tilde{\sigma}^{lj}
\end{align*}
Let $\omega=\frac{1}{2}|\nabla \varphi|^2$, we have
\begin{align*}
\frac{\partial \omega}{\partial t}=-\frac{\varphi^k}{G^2}\nabla_kG=\frac{1}{v^2G^2}\left(F^{i}_l\tilde{\sigma}^{lj}\varphi^k\varphi_{ijk}-v^2 G^k\omega_k-2F^{i}_i\phi\phi^{\prime\prime} \omega\right)
\end{align*}

We want to write the term $\tilde{\sigma}^{lj}\varphi_{ijk}$ in terms of second derivative of $\omega$. Note that
\begin{align*}
\omega_{ij}&=\varphi_{kij}\varphi^k+\varphi_{ki}\varphi^{k}_j\\
&=\varphi_{ijk}\varphi^k+(\sigma_{ij}\sigma_{kp}-\sigma_{ik}\sigma_{jp})\varphi^p\varphi^k+\varphi_{ki}\varphi^{k}_j\\
&=\varphi_{ijk}\varphi^k+\sigma_{ij}|\nabla\varphi |^2-\varphi_i\varphi_j+\varphi_{ki}\varphi^{k}_j
\end{align*}
and
\begin{align*}
\tilde{\sigma}^{lj}\left(\sigma_{ij}|\nabla\varphi |^2-\varphi_i\varphi_j\right)=\delta_{i}^l|\nabla \varphi|^2-\varphi_i\varphi^l
\end{align*}
Thus we have
\begin{align*}
\frac{\partial w}{\partial t}=\frac{1}{v^2G^2}\left(F^{i}_l\tilde{\sigma}^{lj}\omega_{ij}-F^{i}_i|\nabla \varphi|^2+F^{i}_l\varphi_i\varphi^l-v^2 G^k\omega_k-2F^{i}_i\phi\phi^{\prime\prime} \omega\right)-\frac{1}{v^2G^2}F^{i}_l\tilde{\sigma}^{lj}\varphi_{ki}\varphi^{k}_j
\end{align*}

Note that $-F^{i}_i|\nabla \varphi|^2+F^{i}_l\varphi_i\varphi^l\leq 0$ and $-F^{i}_l\tilde{\sigma}^{lj}\varphi_{ki}\varphi^{k}_j\leq 0$, thus by the maximum principle, we have
\begin{align*}
\omega(\cdot,t)\leq \sup\omega_0
\end{align*}

More pricisely, if $F\leq C$, consider the test function $\tilde{\omega}=\omega e^{\lambda t}$, thus at the maximum point of $\tilde{\omega}$, we have
\begin{align*}
0&\leq \frac{\partial \omega}{\partial t}e^{\lambda t}+\lambda \omega e^{\lambda t}\leq \omega e^{\lambda t}\left(\frac{-2F^{i}_i\phi\phi^{\prime\prime}}{v^2G^2}+\lambda\right)\\
&=  \omega e^{\lambda t}\left(\frac{-2F^{i}_i(h^i_j)\phi^{\prime\prime} }{\phi F^2(h^i_j)}+\lambda\right)\\
&\leq \omega e^{\lambda t}\left(\frac{-2n\phi^{\prime\prime} }{\phi F^2(h^i_j)}+\lambda\right)\leq 0
\end{align*}
if $0<\lambda\leq \frac{2n}{sup^2 F}\leq \frac{2n\phi^{\prime\prime} }{\phi sup^2F}$, we have used Lemma \ref{sigma_k 2} in last line. By maximum principle,
\begin{align*}
|\nabla \varphi|\leq Ce^{-\alpha t}
\end{align*}
where $0<\alpha\leq \frac{n}{sup^2F}$.
\end{proof}

\section{bound for $F$}
\begin{lemm}\label{lemma 8}
Along the flow, $F\leq C$, where $C$ depends on $\Sigma_0,n,k$.
\end{lemm}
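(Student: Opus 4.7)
The plan is to apply the maximum principle to $F$ directly. Asymptotically a slice $\{r=\text{const}\}$ in anti-de Sitter-Schwarzschild space has principal curvatures $\phi'/\phi \to 1$, so $F$ is approaching its value $n$ on the identity; heuristically the flow drives $F$ toward this regime, and in particular $F$ should stay bounded from above by a constant depending only on $\Sigma_0$, $n$, and $k$.

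The computational starting point is the evolution equation \eqref{6.5}. Contracting with $F^j_i := \partial F/\partial h^i_j$ gives
$$\dot F = -\frac{1}{F}\,F^j_i (h^2)^i_j \;-\; F^{ij}\nabla_i \nabla_j\!\left(\tfrac{1}{F}\right) \;-\; \frac{1}{F}\,F^j_i\,\bar R^{\,i}_{\,\nu j\nu}.$$
Fix a time $t$ and a spatial point at which $F(\cdot,t)$ attains its maximum on $M_t$, and choose an orthonormal frame diagonalizing $h^i_j$. At this point $\nabla F=0$ and $\nabla^2 F\leq 0$, so the middle term reduces to $F^{-2}\,F^{ij}\nabla_i\nabla_j F$, which is non-positive because $F^{ij}$ is positive-definite on $\Gamma_k$.

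For the first term, Lemma \ref{sigma_k 1} gives $F^j_i (h^2)^i_j \geq F^2/n$. For the curvature term, Lemma \ref{ADS curvature} yields $\bar R^{\,i}_{\,\nu j\nu} = -\delta^i_j + O(e^{-(n+1)r})$, and the $C^0$ estimate (Lemma \ref{C^0 estimate}) provides the uniform lower bound $r \geq \underline r(0)$, so the error is uniformly bounded by a constant depending only on $\Sigma_0$. Combined with Lemma \ref{sigma_k 2} (which gives $\sum_i F^i_i \leq nk$), the curvature contribution is at most $C_1/F$ with $C_1=C_1(\Sigma_0,n,k)$. Assembling the three pieces,
$$\dot F \Big|_{\max} \;\leq\; -\frac{F}{n} \;+\; \frac{C_1}{F},$$
so the spatial maximum of $F$ cannot cross the threshold $\sqrt{nC_1}$ from below, and hence $F \leq \max\bigl\{\sup_{\Sigma_0} F,\ \sqrt{nC_1}\bigr\}$.

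The main delicacy is that in anti-de Sitter-Schwarzschild space the identity $\bar R^{\,i}_{\,\nu j\nu}=-\delta^i_j$ does not hold exactly, so extra curvature error terms appear relative to the pure hyperbolic setting. The key observation making the argument work is that these errors carry an exponential factor $O(e^{-(n+1)r})$ which, thanks to the uniform lower bound on $r$ from Lemma \ref{C^0 estimate}, is absorbed into the constant $C_1$; the dominant term $-F/n$ arising from the combination of the negative ambient curvature and Lemma \ref{sigma_k 1} then drives $F$ down whenever it is large.
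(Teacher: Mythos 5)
Your proposal is correct and follows the same route as the paper: contract \eqref{6.5} with $F^j_i$, apply Lemma~\ref{sigma_k 1} to get the leading $-F/n$ term, discard the non-positive second-order term at the spatial maximum, and bound the ambient-curvature contribution using Lemma~\ref{sigma_k 2} together with the uniform bound on $\bar R^i_{\,\nu j\nu}$ to obtain $\dot F\big|_{\max}\leq -F/n + C/F$, hence $F\leq C$. The only (cosmetic) difference is that the paper states the resulting ODE inequality for $F^2_{\max}$ rather than $F_{\max}$; the content is identical.
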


\begin{proof}
By (\ref{6.5}), we have
\begin{align*}
\dot{F}&=F^{j}_i\left(-\frac{1}{F}h^{i}_kh^k_{j}-\nabla^i\nabla_j\left(\frac{1}{F}\right)-\frac{1}{F}\bar{R}^i_{\nu j\nu}\right)\\
&=F^{j}_i\left(-\frac{1}{F}h^{i}_kh^k_{j}+\frac{\nabla^i\nabla_jF}{F^2}-2\frac{\nabla^i F\nabla_jF}{F^3}-\frac{1}{F}\bar{R}^i_{\nu j\nu}\right)
\end{align*}
By Lemma \ref{sigma_k 1}, we have
\begin{align*}
\dot{F}\leq -\frac{F}{n}+F^{j}_i\left(\frac{\nabla^i\nabla_jF}{F^2}-2\frac{\nabla^i F\nabla_jF}{F^3}-\frac{1}{F}\bar{R}^i_{\nu j\nu}\right)
\end{align*}

By Lemma \ref{ambient curvature}, we know that $\bar{R}^i_{\nu j\nu}$ is uniformly bounded, together with Lemma \ref{sigma_k 2}, we have
\begin{align*}
-F^{j}_i\bar{R}^i_{\nu j\nu}\leq C\sum_i F^{ii}\leq C
\end{align*}
thus we get
\begin{align*}
\dot{F}^2_{max}\leq -\frac{2}{n}F^2_{max}+C
\end{align*}
which gives
\begin{align*}
F^2_{max}\leq C
\end{align*}

\end{proof}

\begin{lemm}\label{lemma 9}
Along the flow, $F\geq c$, where $c$ depends on $\Sigma_0,n,k$.
\end{lemm}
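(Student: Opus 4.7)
The plan is to bound $F$ from below by applying the parabolic minimum principle to the auxiliary function $\Psi := F\phi/\phi^\prime$. On any round sphere of radius $r$ in anti-de Sitter-Schwarzschild space the principal curvatures equal $\phi^\prime/\phi$, so $F = n\phi^\prime/\phi$ and $\Psi \equiv n$; we therefore expect $\Psi$ to stay comparable to $n$ along the flow. A uniform lower bound $\Psi \geq c > 0$ will yield $F \geq c\phi^\prime/\phi$, and since Lemma \ref{C^0 estimate} keeps the hypersurface in $\{r \geq \underline{r}_0\}$ while $\phi^\prime/\phi$ is monotonically increasing in $r$ (from $0$ at the horizon to $1$ at infinity), this gives $F \geq c\phi^\prime(\underline{r}_0)/\phi(\underline{r}_0) > 0$.

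The main calculation is to evaluate $\mathcal L\Psi := \dot\Psi - (F^{ij}/F^2)\nabla_i\nabla_j\Psi$. Starting from the Weingarten evolution (\ref{6.5}) and using $\dot r = 1/(vF)$ in the Lagrangian frame (so that $\dot\phi^\prime = \phi^{\prime\prime}/(vF)$), the computation produces three essential ingredients: (i) the term $F^{ij}h_{ik}h^k_j/F^2$, bounded below by $1/n$ via Lemma \ref{sigma_k 1}; (ii) the ambient curvature trace, which by Lemma \ref{ADS curvature} equals $-\sum_i F^{ii}/F^2 + O(e^{-(n+1)r})\sum_i F^{ii}/F^2$, with $\sum_i F^{ii}$ controlled by Lemma \ref{sigma_k 2}; and (iii) the Hessian $F^{ij}\nabla_i\nabla_j r$, which via the identity $\phi\nabla_i\nabla_j r = \phi^\prime(g_{ij} - r_i r_j) - h_{ij}u$ (a direct consequence of the Hessian of $\Phi$ from the preliminaries) together with the Euler identity $F^{ij}h_{ij} = F$ reduces to an explicit expression in $\sum_i F^{ii}$ and $F$. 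At the minimum of $\Psi$, the condition $\nabla\Psi = 0$ couples $\nabla F$ to $\nabla r$ through $\nabla(\phi^\prime/\phi)$, and $\nabla^2\Psi \geq 0$ combined with the above ingredients yields an inequality of the form
\begin{align*}
\mathcal L\Psi \;\geq\; c_1 (n - \Psi) - C_1 e^{-(n+1)\underline{r}_0}
\end{align*}
at the minimum, with $c_1, C_1 > 0$ depending only on $\Sigma_0, n, k$.

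The parabolic minimum principle then traps $\Psi$ above $n - C_1 e^{-(n+1)\underline{r}_0}/c_1$; moreover, since $\phi(\underline{r}(t)) \geq e^{t/n}\phi(\underline{r}_0) \to \infty$ by Lemma \ref{C^0 estimate}, the perturbation actually decays in time, so $\Psi \geq c > 0$ uniformly. The principal obstacle is that at this stage of the paper no a priori bound on the principal curvatures is available — these come only in Section 5 — so the treatment of $\sum_i F^{ii}\lambda_i^2$ in $\mathcal L\Psi$ must rely entirely on Lemma \ref{sigma_k 1}. What makes the estimate close is the leading-order cancellation near the model value $\Psi = n$ between $F^{ij}h_{ik}h^k_j$ and $-F^{ij}\bar R_{i\nu j\nu}$, with the Hessian formula for $r$ producing the coercive drift $c_1(n - \Psi)$ that pushes $\Psi$ back toward $n$ whenever it dips below.
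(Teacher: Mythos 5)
Your proposal does not close the argument: the claimed cancellation between $F^{ij}h_{ik}h^k_j$ and $-F^{ij}\bar R_{i\nu j\nu}$ only holds to leading order near the round model and is exactly what must be \emph{proved}, not assumed. At a minimum of $\Psi=F\phi/\phi^\prime$ you need a \emph{lower} bound on $\mathcal L\Psi$, which requires a lower bound on $-\frac{F^{ij}(h^2)_{ij}}{F}$, i.e.\ an \emph{upper} bound on $F^{ij}(h^2)_{ij}=\sum_iF^{ii}\lambda_i^2$. Lemma \ref{sigma_k 1} provides the opposite inequality, $\sum_iF^{ii}\lambda_i^2\geq F^2/n$ (useful for the \emph{upper} bound on $F$ in Lemma \ref{lemma 8}), and gives nothing in the direction you need. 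Since no principal-curvature bound is available at this stage — as you yourself observe — $\sum_iF^{ii}\lambda_i^2$ is uncontrolled from above, and your drift inequality $\mathcal L\Psi\geq c_1(n-\Psi)-C_1e^{-(n+1)\underline r_0}$ does not follow.

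The structural reason your test function fails is instructive. The radial factor $\phi/\phi^\prime$ is a function of $r$ alone, and the Hessian of any such function contains only $h_{ij}$ to the first power (via $\phi\nabla_i\nabla_jr=\phi^\prime(g_{ij}-r_ir_j)-h_{ij}u$); it never produces a $(h^2)_{ij}$ term. So nothing in $\mathcal L(\phi/\phi^\prime)$ is available to cancel $-\frac{F^{ij}(h^2)_{ij}}{F}$ coming from $\mathcal L F$. The paper's proof uses the test function $-\log F-\log\tilde u$ with $\tilde u=ue^{-t/n}$ precisely because $\nabla_i\nabla_ju$ (Lemma \ref{support function}) carries the term $-(h^2)_{ij}u$, which after dividing by $u$ cancels the dangerous quadratic curvature term \emph{identically}, for all configurations, not only near the model. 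After that exact cancellation, only the ambient curvature trace $\frac{F^j_i\bar R^i_{\nu j\nu}}{F^2}$ and the Codazzi-type remainder survive, and these are handled by Lemmas \ref{ambient curvature} and \ref{sigma_k 2} to give $0\leq-\frac{C}{F^2}+\frac1n$. If you want to save a radial/normalized test function along your lines, you would have to pair $F$ with the support function $u$ (as the paper does) rather than with a pure function of $r$.
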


\begin{proof}
Consider the function $-\log F-\log \tilde{u}$, where $\tilde{u}=ue^{-t/n}$, by Lemma \ref{C^0 estimate}, $\tilde{u}$ is uniformly bounded. At the maximum point, we have
\begin{align*}
&-\frac{F_i}{F}-\frac{u_i}{u}=0, -\frac{F_{ij}}{F}+\frac{F_iF_j}{F^2}-\frac{u_{ij}}{u}+\frac{u_iu_j}{u^2}\leq 0\\
&-\frac{F^j_i}{F}\dot{h}^i_j-\frac{\dot{u}}{u}+\frac{1}{n}\geq 0
\end{align*}
by (\ref{6.5}), (\ref{support derivative}) and the critical equation, we have
\begin{align*}
0&\leq -\frac{F^j_i}{F} \left(-\frac{1}{F}h^{i}_kh^k_{j}-\nabla^i\nabla_j\left(\frac{1}{F}\right)-\frac{1}{F}\bar{R}^i_{\nu j\nu}\right)-\frac{\phi^\prime}{Fu}-\frac{\phi  g^{ij}F_ir_j}{F^2u}+\frac{1}{n}\\
&= \frac{F^j_i}{F^2}\left(h^i_kh^k_j+\bar{R}^i_{\nu j\nu}\right)+\frac{g^{ki}F^j_i}{F^2}\left(-\frac{F_{kj}}{F}+2\frac{F_kF_j}{F^2}\right)-\frac{\phi^\prime}{Fu}-\frac{\phi  g^{ij}F_ir_j}{F^2u}+\frac{1}{n}\\
&\leq \frac{F^j_i}{F^2}\left(h^i_kh^k_j+\bar{R}^i_{\nu j\nu}\right)+\frac{g^{ki} F^j_i}{F^2}\frac{u_{kj}}{u}-\frac{\phi^\prime}{Fu}-\frac{\phi  g^{ij}F_ir_j}{F^2u}+\frac{1}{n}
\end{align*}
by lemma \ref{support function}, we have
\begin{align*}
0&\leq \frac{F^j_i}{F^2}\left(h^i_kh^k_j+\bar{R}^i_{\nu j\nu}\right)+\frac{g^{ki} F^j_i}{F^2u}\left(  g^{mn} h_{kjm}\phi r_n+\phi^\prime h_{kj}-(h^2)_{kj}u+g^{mn}\nabla_m\Phi\bar{R}_{\nu jnk}\right)\\
&-\frac{\phi^\prime}{Fu}-\frac{\phi  g^{ij}F_ir_j}{F^2u}+\frac{1}{n}\\
&= \frac{F^j_i\bar{R}^i_{\nu j\nu }}{F^2}+\frac{g^{ki}F^j_i}{F^2u}  g^{mn}\nabla_m\Phi\bar{R}_{\nu jnk}+\frac{1}{n}
\end{align*}
by (\ref{6.9}) and (\ref{6.10}), we have
\begin{align*}
0&\leq \frac{g^{ki}F^j_i}{F^2}\bigg(\left(\frac{1}{v^2}\delta_{kj}+\frac{2r_kr_j}{v^2\phi^2}+\frac{r_kr_j|\nabla r|^2}{v^2\phi^4}\right)(-\phi\phi^{\prime\prime})+\frac{(|\nabla r|^2\delta_{kj}-r_kr_j)}{v^2\phi^2}(1-{\phi^\prime}^2)\\
&+\left(\frac{|\nabla r|^2\delta_{jk}-r_jr_k}{v^2\phi^2}\right)\left(-\phi\phi^{\prime\prime}-(1-{\phi^\prime}^2)\right)\bigg)+\frac{1}{n}\\
&=\frac{g^{ki}F^j_i}{F^2}\left(\delta_{kj}+\frac{r_kr_j}{\phi^2}\right)(-\phi\phi^{\prime\prime})+\frac{1}{n}\\
&\leq -\frac{g^{ij}F^{j}_i}{F^2}\phi\phi^{\prime\prime}+\frac{1}{n}\leq -\frac{C}{F}+\frac{1}{n}
\end{align*}
we have used the Lemma \ref{sigma_k 2} in last line. Now we conclude that $F$ is bounded below.
\end{proof}

\begin{rema}
For the lower bound, we only need the first inequality of Lemma \ref{sigma_k 2}, which is satisfied by a class of concave functions with homogeneous degree one, for example $F=\sigma_k^{1/k}$, etc.
\end{rema}

\section{bound for principal curvature}
\begin{lemm}\label{princial curvature}
Along the flow, $|\kappa_i|\leq C$ if $F$ is a hessian quotient function, where $\kappa_i$ is the principal curvature of $\Sigma_t$, $C$ depends on $\Sigma_0,n,k$.
\end{lemm}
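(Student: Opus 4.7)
My plan begins by reducing the claim to an upper bound on the largest principal curvature $\kappa_{\max}$. Since the flow stays in $\Gamma_k\subset\Gamma_1$, we have $\sigma_1=\sum_i\kappa_i>0$, so an estimate $\kappa_{\max}\le C$ automatically yields $\kappa_{\min}\ge -(n-1)C$, giving the two-sided bound $|\kappa_i|\le C$. It therefore suffices to prove an upper bound on $\kappa_{\max}$.

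For this upper bound I apply the parabolic maximum principle to a test function of the form
\begin{align*}
W=\log\kappa_{\max}-\log(u-a)+\gamma r,
\end{align*}
where $u=\phi/v$ is the support function, $a$ is chosen so that $u-a$ stays bounded below by a positive constant (using the $C^0$ and $C^1$ estimates of Lemmas~\ref{C^0 estimate} and \ref{C^1 estimate}), and $\gamma>0$ is to be fixed. At a point where $W$ first attains a new maximum I diagonalize $(h^i_j)$ in an orthonormal frame with $\kappa_n=\kappa_{\max}$ and work with $\log h^n_n$ in place of the non-smooth $\log\kappa_{\max}$. Combining the evolution equation (\ref{6.6}) with the critical relation $\nabla W=0$, which expresses $\nabla h^n_n/h^n_n$ in terms of $\nabla u$ and $\nabla r$, together with the second-order inequality $F^{pq}\nabla^2_{pq}W\le 0$, and substituting Lemma~\ref{support function} for the second derivatives of $u$ and Lemma~\ref{ADS curvature} for the ambient curvature, produces an inequality whose leading terms are the quadratic good term $-(h^n_n)^2/F$ coming from $-h^i_kh^k_j/F$ in (\ref{6.5}), together with an additional quadratic good term $-u(h^n_n)^2/(u-a)$ coming from $-(h^2)_{ij}u$ in the second derivative of $u$. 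Since $F$ is pinched between two positive constants by Lemmas~\ref{lemma 8} and \ref{lemma 9}, and all the remaining terms grow at most linearly in $h^n_n$, the resulting inequality takes the form $0\le -c_1(h^n_n)^2+c_2h^n_n+c_3$, forcing $\kappa_{\max}\le C$.

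The delicate step is handling the second-order contribution
\begin{align*}
\frac{F^{pq,rs}h_{pq,n}h_{rs,n}}{F^2}-\frac{2(F^{pq}h_{pq,n})^2}{F^3}
\end{align*}
coming from $-\nabla^i\nabla_j(1/F)$ when one expands $\partial_t\log h^n_n$. For a general concave $F$ these two pieces have opposite signs and cannot be absorbed simultaneously, which is exactly why the hypothesis that $F$ is a Hessian quotient enters. The Hessian quotient $F=nC^{k-1}_n/C^k_n\cdot\sigma_k/\sigma_{k-1}$ is inverse concave on $\Gamma_k$, and the corresponding inverse-concavity inequality for $F^{pq,rs}$, when applied together with the gradient relations supplied by $\nabla W=0$, exactly cancels the bad term $-2(F^{pq}h_{pq,n})^2/F^3$. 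This absorption is the technical heart of the proof; once carried out, the bound on $\kappa_{\max}$, and hence the lemma, follows from the maximum principle argument outlined above.
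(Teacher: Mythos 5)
Your overall structure---maximum principle applied to a test function of the shape $\log\kappa_{\max}-\log(\text{support quantity})+(\text{correction})$, using the evolution equation (\ref{6.6}), Lemma~\ref{support function} for $\nabla^2 u$, and Lemma~\ref{ADS curvature} for the ambient curvature---matches the paper, and your variant $\log(u-a)+\gamma r$ in place of the paper's $\log\tilde u=\log(ue^{-t/n})$ would in fact also work. However, your account of the technical heart of the argument, and in particular of \emph{why} the Hessian-quotient hypothesis is needed, is wrong, and this is not a cosmetic error.

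You assert that in the expansion of $\partial_t\log h^n_n$ the two pieces
\begin{align*}
\frac{F^{pq,rs}h_{pq,n}h_{rs,n}}{F^2}
\qquad\text{and}\qquad
-\frac{2\bigl(F^{pq}h_{pq,n}\bigr)^2}{F^3}
\end{align*}
``have opposite signs and cannot be absorbed simultaneously'' for a general concave $F$, and that inverse concavity is required to cancel the second one. Both statements are false. Since $F$ is concave, $F^{pq,rs}$ is negative semidefinite, so the first term is $\leq 0$. The second term equals $-2(\nabla_n F)^2/F^3$, which is manifestly $\leq 0$ because $F>0$ on $\Gamma_k$. Both are favorable in the maximum-principle inequality and are simply discarded; concavity alone suffices, no inverse concavity is invoked anywhere in the paper, and there is no delicate cancellation between them. (This is exactly what the paper does when passing from (\ref{9.3}) to the unnumbered inequality preceding (\ref{9.6}): ``together with the concavity of $F$.'')

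The actual place where the Hessian-quotient structure enters is entirely different: it is Lemma~\ref{sigma_k 2}, the two-sided bound $n\leq\sum_i F^{ii}\leq nk$. The upper bound on $\sum_i F^{ii}$---which fails for $\sigma_k^{1/k}$---is what lets one estimate the ambient-curvature contributions
\begin{align*}
\frac{g^{k1}F^{pq}}{F^2}\bigl(h^m_q\bar R_{kp1m}+h^m_1\bar R_{kpqm}+\nabla_p\bar R_{k1q\nu}+\nabla_k\bar R_{1pq\nu}\bigr)\leq Ch^1_1+C,
\end{align*}
and it was already essential in Lemma~\ref{lemma 8} for the upper bound on $F$, which you cite. (The remark after Lemma~\ref{lemma 9} in the paper makes this explicit: the \emph{lower} bound on $F$ uses only $\sum F^{ii}\geq n$, which more general concave $F$ satisfy.) So the gap is not in your choice of test function but in the mechanism you propose for absorbing the second-order terms and, consequently, in your identification of where the hypothesis on $F$ is used. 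Any correct completion of your argument would, like the paper, drop both second-order pieces by concavity and instead invoke the trace bound $\sum F^{ii}\leq nk$ (together with the two-sided bound on $F$) to control the remaining linear-in-$h^1_1$ terms.
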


\begin{proof}
Define $\tilde{u}=ue^{-t/n}$, consider the test function $\log(\eta)-\log(\tilde{u})$, where
\begin{align*}
\eta=\sup\{h_{ij}\xi^i\xi^j:g_{ij}\xi^i\xi^j=1\}
\end{align*}
WLOG, we suppose that at the maximum point $\eta=h^1_1$, and we have
\begin{align}\label{9.1}
\frac{\dot{h_1^1}}{h^1_1}-\frac{\dot{u}}{u}+\frac{1}{n}\geq 0 
\end{align}
and
\begin{align}\label{9.2}
\frac{h^1_{1i}}{h^1_1}-\frac{u_i}{u}=0,\quad\frac{h^1_{1ij}}{h^1_1}\leq \frac{u_{ij}}{u}
\end{align}
by (\ref{6.6}), (\ref{support derivative}) and the critical equation, we have
\begin{align}\label{9.3}
0&\leq \frac{1}{h^1_1}\bigg(-\frac{1}{F}h^1_kh^k_1+ \frac{F^{pq,rs}{h_{pq}}^1h_{rs1}}{F^2}  -\frac{2F^{pq}{h_{pq}}^1F^{rs}h_{rs1}}{F^3}-\frac{1}{F}\bar{R}^1_{\nu 1\nu}\\\nonumber
&+\frac{g^{k1}F^{pq}}{F^2} \big( h_{k1,pq}-h^m_{q}(h_{km}h_{p1}-h_{k1}h_{mp})-h^m_{1}(h_{mk}h_{pq}-h_{kq}h_{mp})\\\nonumber
&+h^m_{q}\bar{R}_{kp1m}+h^m_{1}\bar{R}_{kpqm}+\nabla_p\bar{R}_{k1q\nu}+\nabla_k\bar{R}_{1pq\nu} \big)\bigg)\\\nonumber
&-\frac{\phi^\prime}{Fu}-\frac{\phi  g^{ij}F_ir_j}{F^2u}+\frac{1}{n}
\end{align}
consider the term $\frac{F^{pq}}{F^2} \frac{h^1_{1,pq}}{h^1_1} $, by (\ref{9.2}) and lemma \ref{support function},  we have
\begin{align}\label{9.4}
\frac{F^{pq}}{F^2} \frac{h^1_{1,pq}}{h^1_1} \leq \frac{F^{pq}}{F^2}\frac{u_{pq}}{u}=\frac{ F^{pq}}{F^2u}\left(g^{kl}h_{pqk}\Phi_l+\phi^\prime h_{pq}-(h^2)_{pq}u+g^{kl}\nabla_l\Phi\bar{R}_{\nu pkq}\right)
\end{align}
insert (\ref{9.4}) into (\ref{9.3}), together with the concavity of $F$, yields
\begin{align}
0&\leq \frac{1}{h^1_1}\bigg(-\frac{1}{F}h^1_kh^k_1-\frac{1}{F}\bar{R}^1_{\nu 1\nu}+\frac{g^{k1}F^{pq}}{F^2} \big(-h^m_{1}h_{mk}h_{pq}+h^m_{q}\bar{R}_{kp1m}+h^m_{1}\bar{R}_{kpqm}+\nabla_p\bar{R}_{k1q\nu}+\nabla_k\bar{R}_{1pq\nu} \big)\bigg)\\\nonumber
&+\frac{ g^{kl}F^{pq}}{F^2u}\nabla_l\Phi\bar{R}_{\nu pkq}+\frac{1}{n}
\end{align}

Using the fact $1-{\phi^\prime}^2+\phi\phi^{\prime\prime}\geq 0$, together with (\ref{6.10})
\begin{align}\label{9.5}
g^{kl}\nabla_l\Phi\bar{R}_{\nu pkq}=\left(\frac{|\nabla r|^2\delta_{pq}-r_pr_q}{v^3\phi}\right)\left(-\phi\phi^{\prime\prime}-(1-{\phi^\prime}^2)\right)\leq 0
\end{align}
thus we have
\begin{align}\label{9.6}
0&\leq \frac{1}{h^1_1}\bigg(-\frac{2}{F}h^1_kh^k_1-\frac{1}{F}\bar{R}^1_{\nu 1\nu}+\frac{g^{k1}F^{pq}}{F^2} \big(h^m_{q}\bar{R}_{kp1m}+h^m_{1}\bar{R}_{kpqm}+\nabla_p\bar{R}_{k1q\nu}+\nabla_k\bar{R}_{1pq\nu} \big)\bigg)+\frac{1}{n}
\end{align}

By Lemma \ref{ADS curvature}, all terms involving curvature terms of the ambient space are uniformly bounded, i.e.
\begin{align*}
h^m_{q}\bar{R}_{kp1m}+h^m_{1}\bar{R}_{kpqm}+\nabla_p\bar{R}_{k1q\nu}+\nabla_k\bar{R}_{1pq\nu} \leq Ch^1_1+C
\end{align*}

By Lemma \ref{sigma_k 2} and the lower bound of $F$  Lemma \ref{lemma 9}, 
\begin{align*}
\frac{g^{k1}F^{pq}}{F^2} \big(h^m_{q}\bar{R}_{kp1m}+h^m_{1}\bar{R}_{kpqm}+\nabla_p\bar{R}_{k1q\nu}+\nabla_k\bar{R}_{1pq\nu} \big)\leq Ch^1_1+C
\end{align*}
Plug into (\ref{9.6}), together with the upper bound of $F$ Lemma \ref{lemma 8} yields
\begin{align*}
0\leq -Ch^1_1+C
\end{align*}
i.e. $h^1_1\leq C$, thus we have the lemma.

\end{proof}

\begin{coro}
The solution of the inverse curvature flow exists for all time.
\end{coro}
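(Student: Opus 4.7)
The plan is to reduce the flow to a scalar parabolic equation on $\mathbb{S}^n$ and combine the a priori estimates obtained in Sections 2--5 with standard parabolic regularity theory. Since $\Sigma_0$ is star-shaped and, by the $C^0$ estimate (Lemma \ref{C^0 estimate}), remains confined to a fixed annulus $[r_1,r_2]\subset [s_0,\infty)$, the flow can be written as a scalar PDE for the radial graph function. Using $\varphi$ as in the preliminaries, the equation $\partial_t\varphi = v/(\phi F)$ becomes a fully nonlinear parabolic equation of the form
\begin{equation*}
\partial_t\varphi = \mathcal{G}(x,\varphi,D\varphi,D^2\varphi),
\end{equation*}
so short-time existence follows from standard parabolic theory applied to this scalar equation.

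To extend the solution to all time, I would run the standard continuation argument. Suppose the maximal existence interval is $[0,T)$ with $T<\infty$. The $C^0$ estimate from Lemma \ref{C^0 estimate} gives uniform two-sided bounds on $r$ (equivalently on $\varphi$) on $[0,T)$; the $C^1$ estimate from Lemma \ref{C^1 estimate} gives a uniform bound on $|\nabla \varphi|$; the principal curvature bound in Lemma \ref{princial curvature} together with the lower bound $F\geq c$ (Lemma \ref{lemma 9}) controls $|D^2\varphi|$; and the bound $F\leq C$ (Lemma \ref{lemma 8}) combined with $F\geq c$ shows that the linearized operator is uniformly elliptic at each time, since the eigenvalues $(\kappa_i)$ stay in a fixed compact subset of $\Gamma_k$.

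With uniform $C^2$ bounds and uniform parabolicity in hand, the Krylov--Safonov Hölder estimate applied to the derivative equation, combined with the concavity of $-1/F$ on $\Gamma_k$ (here $F$ is the normalized hessian quotient $\sigma_k/\sigma_{k-1}$, and $-1/F$ is concave in the second fundamental form on $\Gamma_k$), yields uniform $C^{2,\alpha}$ estimates in space-time via Evans--Krylov. A standard Schauder bootstrap then yields $C^{k,\alpha}$ estimates for all $k$, all uniform on $[0,T)$. Thus $\varphi(\cdot,t)$ converges smoothly as $t\to T^-$ to a star-shaped, $k$-convex limit hypersurface $\Sigma_T$, and the short-time existence result applied with $\Sigma_T$ as initial data extends the flow past $T$, contradicting maximality.

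The main obstacle in this outline is confirming that the a priori estimates, together with the structural properties of $F=n\,\tfrac{C_n^{k-1}}{C_n^k}\,\tfrac{\sigma_k}{\sigma_{k-1}}$, really do give uniform parabolicity and an Evans--Krylov-type concavity needed for the second-order Hölder estimate; once these are in place, the continuation argument is entirely routine. In particular, one must verify that the principal curvatures remain uniformly inside a compact subset of $\Gamma_k$, which follows from the upper bound in Lemma \ref{princial curvature} together with the lower bound on $F$ in Lemma \ref{lemma 9} (which prevents $(\kappa_i)$ from approaching $\partial\Gamma_k$, since $F\to 0$ there).
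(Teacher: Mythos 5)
Your argument is essentially the paper's own proof, spelled out in more detail: the paper also invokes the $C^0$, $C^1$, and curvature estimates of Lemmas \ref{C^0 estimate}, \ref{C^1 estimate}, \ref{lemma 8}, \ref{lemma 9}, and \ref{princial curvature} to obtain uniform ellipticity, then applies Evans--Krylov and Schauder theory to bootstrap to all higher orders and conclude long-time existence by the standard continuation argument. Your additional remarks (the reduction to a scalar equation in $\varphi$, the role of the lower bound on $F$ in keeping $(\kappa_i)$ away from $\partial\Gamma_k$, and the concavity needed for Evans--Krylov) are all correct and simply make explicit what the paper leaves implicit.
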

\begin{proof}
We have established up to $C^2$ apriori estimate, by Lemma \ref{princial curvature}, $F$ is uniformly elliptic, by Evans-Krylov theorem, we have $C^{2,\alpha}$ estimate, together with Schauder estimate, we have all the high order estimates, the corollary now follows.
\end{proof}

\section{Asmptotic behavior of second fundamental form}
In this section, we consider the asmptotic behaviour of second fundamental form, the test function was first considered by Scheuer in \cite{S1}.
\begin{lemm}\label{upbound of F}
\begin{align*}
\limsup_{t\rightarrow \infty}\sup_i\kappa_i\leq 1,
\end{align*}
where $\kappa_i$ is the principal curvature of $M$.
\end{lemm}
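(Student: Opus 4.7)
The strategy is to revisit the maximum-principle argument of Lemma~\ref{princial curvature} with the test function $W=\log\eta-\log\tilde u$ (where $\eta=h^1_1$ is the largest principal curvature and $\tilde u=ue^{-t/n}$), and to sharpen the ambient-curvature estimates used there. In the proof of Lemma~\ref{princial curvature} the terms involving $\bar R$ and $\nabla\bar R$ in the critical inequality \eqref{9.6} were merely bounded by constants, yielding $\eta\le C$; but by Lemma~\ref{ADS curvature} these are really exponentially small corrections to their hyperbolic values. Combined with $r\to\infty$ from Lemma~\ref{C^0 estimate}, the uniform bound on $h^i_j$ from Lemma~\ref{princial curvature}, and the two-sided bound on $F$ from Lemmas~\ref{lemma 8}--\ref{lemma 9}, the entire bracket multiplying $\tfrac{g^{k1}F^{pq}}{F^2}$ in \eqref{9.6} reduces to $o(1)$ as $t\to\infty$, while $-\bar R^1_{\nu 1\nu}/F\to 1/F$.

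The asymptotic form of \eqref{9.6} at the spatial maximizer $x^*(t)$ of $W$ is therefore
\begin{equation*}
0\le\frac{1}{\eta(x^*)}\Big(-\frac{2\,\eta(x^*)^2}{F}+\frac{1}{F}\Big)+\frac{1}{n}+o(1).
\end{equation*}
Next, I would invoke the Newton--Maclaurin bound $F\le\sigma_1\le n\eta$, which is valid for the normalized hessian quotient $F=n\tfrac{C_n^{k-1}}{C_n^k}\tfrac{\sigma_k}{\sigma_{k-1}}$ via $(\sigma_k/\binom{n}{k})^{1/k}\le\sigma_1/n$. Multiplying the displayed inequality through by $F\eta>0$ gives $0\le-2\eta^2+1+F\eta/n+o(1)\le-\eta^2+1+o(1)$, so $\eta(x^*(t),t)\le 1+o(1)$ as $t\to\infty$.

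Finally, one must transfer this bound from the maximizer $x^*(t)$ of $W$ to the spatial supremum of $\eta$. Writing $\tilde x(t)=\arg\max_x\eta(\cdot,t)$ and using $W(x^*)\ge W(\tilde x)$ yields $\eta(\tilde x)\le\eta(x^*)\cdot\tilde u_{\max}(t)/\tilde u_{\min}(t)$. The ratio $\tilde u_{\max}/\tilde u_{\min}$ tends to $1$ as $t\to\infty$ because $v\to 1$ uniformly by Lemma~\ref{C^1 estimate} and the spatial oscillation of $\phi(r(\cdot,t))e^{-t/n}$ vanishes, which follows by integrating $|\nabla\varphi|\le Ce^{-\alpha t}$ along $\mathbb{S}^n$ and using the asymptotic $\phi\sim\sinh r$ from Lemma~\ref{ADS curvature}.

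\textbf{Main obstacle.} The most delicate point is the transfer in the last step: the exponential decay rate of $|\nabla\varphi|$ from Lemma~\ref{C^1 estimate} is closely balanced against the exponential growth of $\phi$, so the spatial uniformity of $\tilde u$ requires a careful interplay between Lemmas~\ref{C^0 estimate} and \ref{C^1 estimate}. A cleaner alternative --- likely the route taken in Scheuer~\cite{S1} --- is to work from the outset with a test function that is manifestly asymptotically spatially uniform, for instance $\log\eta-\log(\phi'/\phi)$ or $\eta-\phi'/\phi$, at the cost of recomputing the critical inequality for the modified test function but with the benefit of bypassing the transfer step entirely.
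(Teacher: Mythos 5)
Your ``asymptotic form of \eqref{9.6}'' is incorrect, and this breaks the whole argument. The bracket multiplying $\frac{g^{k1}F^{pq}}{F^2}$ does \emph{not} reduce to $o(1)$: substituting the hyperbolic limit $\bar R_{\alpha\beta\gamma\mu}\to-\delta_{\alpha\gamma}\delta_{\beta\mu}+\delta_{\alpha\mu}\delta_{\beta\gamma}$ from Lemma~\ref{ADS curvature} and diagonalizing, the terms $h^m_q\bar R_{kp1m}+h^m_1\bar R_{kpqm}$ contracted with $g^{k1}F^{pq}$ give $F^p_p(-h^p_p+h^1_1)+O(e^{-\alpha t})$, and by Euler's relation $F^p_ph^p_p=F$ this is $-F+F^p_p\,h^1_1+O(e^{-\alpha t})$. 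The resulting contribution $\frac{F^p_p}{F^2}h^1_1$ is order one --- it equals $1/n$ on the round sphere --- so the correct asymptotic inequality is
\begin{align*}
0\le -\frac{2\eta}{F}+\frac{F^p_p}{F^2}+\frac{1}{n}+o(1),
\end{align*}
not the one you display. This no longer closes: to deduce $\eta\le 1+o(1)$ one would need $\frac{F^p_p}{F^2}\le\frac{1}{n}$, but Lemma~\ref{sigma_k 2} only gives $n\le F^p_p\le nk$, and neither $F\to n$ nor $\eta\to1$ is available at this point (the former is Lemma~\ref{lower bound of F}, which comes \emph{after} and uses the present lemma). Your subsequent manipulation with Newton--Maclaurin would then give $\eta\lesssim F^p_p/F$, which is not $\le 1$ a priori.

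The paper resolves exactly this obstruction by adding $+r-\log2$ to the test function and multiplying by $t$. The $+r$ produces a $-r_{pq}$ term in the critical second-order inequality, and by \eqref{4.2}, $-F^{pq}r_{pq}=Fv-F^{pq}\bigl(\phi\phi'\sigma_{pq}+\tfrac{2\phi'r_pr_q}{\phi}\bigr)$; asymptotically the middle term becomes $-F^p_p$, which \emph{exactly} cancels the troublesome $\frac{1}{h^1_1}\cdot\frac{F^p_p}{F^2}h^1_1=\frac{F^p_p}{F^2}$, leaving only the clean inequality $0\le-\frac{2t_0}{F}h^1_1+\frac{2t_0}{F}+C$. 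This is an algebraic cancellation, not an asymptotic estimate, and it is the missing idea in your proposal. The factor $t$ is the other missing idea: the quantity $-\log u+r-\log2=\log v-\log\phi+r-\log2$ tends to zero uniformly in space (each piece contributes $O(te^{-2\alpha t})$ or $O(te^{-2t/n})$ after multiplying by $t$, using Lemmas~\ref{C^0 estimate} and~\ref{C^1 estimate}), so the bound $w\le C$ yields $t\log\eta\le C$ pointwise, with no need to transfer from a maximizer. Your transfer step is indeed where you would otherwise get stuck, for the reason you flag: $r_{\max}-r_{\min}\le Ce^{(1/n-\alpha)t}$, and Lemma~\ref{C^1 estimate} only gives $\alpha\le n/\sup^2F$, which need not exceed $1/n$, so the oscillation of $\tilde u$ need not vanish. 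Your ``cleaner alternative'' with a spatially uniform test function is the right instinct, but you must also notice that it has to be amplified by $t$ to upgrade uniform boundedness into the $O(1/t)$ decay of $\log\eta$.
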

\begin{proof}
Let's consider the test function $w=\left(\log \eta-\log\tilde{u}+r-\log 2 \right)t $, where 
\begin{align*}
\eta=\sup\{h_{ij}\xi^i\xi^j:g_{ij}\xi^i\xi^j=1\}
\end{align*}

Noting that 
\begin{align*}
\left(-\log\tilde{u}+r-\log 2 \right)t=\left(\log v-\log\phi+r-\log 2\right)t
\end{align*}
by Lemma \ref{C^1 estimate}, $t\log v\leq C$. By Lemma \ref{ADS curvature}, we have
\begin{align*}
\phi\geq \frac{e^r}{2}-Ce^{-r}
\end{align*}
thus
\begin{align*}
\left(-\log\phi+r-\log 2\right)t\leq t\log {\frac{e^r}{e^r-Ce^{-r}}}\leq t\log\left( 1+Ce^{-2r}\right)\leq C
\end{align*}
i.e. 
\begin{align}\label{up bound of}
\left(-\log\tilde{u}+r-\log 2 \right)t\leq C
\end{align}
Similarly,
\begin{align}\label{lower bound of}
\left(-\log\tilde{u}+r-\log 2 \right)t\geq -C
\end{align}

WLOG, we suppose that at the maximum point of $w$, say $(x_0,t_0)$, $\eta=h^1_1$, and we have
\begin{align}
0\leq \left(\frac{\dot{h^1_1}}{h^1_1}-\frac{\dot{u}}{u}+\dot{r}\right)t+\left(\log h^1_1-\log\tilde{u}+r-\log 2 \right)
\end{align}
and
\begin{align}
&\frac{h^1_{1i}}{h^1_1}-\frac{u_i}{u}+r_i=0\\
&\frac{h^1_{1ij}}{h^1_1}-\frac{h^1_{1i}h^1_{1j}}{(h^1_1)^2}-\frac{u_{ij}}{u}+\frac{u_iu_j}{u^2}+r_{ij}\leq 0\nonumber
\end{align}
by (\ref{4.1}), (\ref{6.6}), (\ref{support derivative}) and the critical equation, we have
\begin{align*}
0&\leq \frac{t_0}{h^1_1}\bigg(-\frac{1}{F}h^1_kh^k_1+ \frac{F^{pq,rs}{h_{pq}}^1h_{rs1}}{F^2}  -\frac{2F^{pq}{h_{pq}}^1F^{rs}h_{rs1}}{F^3}-\frac{1}{F}\bar{R}^1_{\nu 1\nu}\\\nonumber
&+\frac{g^{k1}F^{pq}}{F^2} \big( h_{k1,pq}-h^m_{q}(h_{km}h_{p1}-h_{k1}h_{mp})-h^m_{1}(h_{mk}h_{pq}-h_{kq}h_{mp})\\\nonumber
&+h^m_{q}\bar{R}_{kp1m}+h^m_{1}\bar{R}_{kpqm}+\nabla_p\bar{R}_{k1q\nu}+\nabla_k\bar{R}_{1pq\nu} \big)\bigg)\\\nonumber
&-\frac{t_0}{u}\left(\frac{\phi^\prime}{F}+\frac{\phi  g^{ij}F_ir_j}{F^2}\right)+\frac{vt_0}{F}+\left(\log h^1_1-\log\tilde{u}+\tilde{r}-\log 2 \right)
\end{align*}
i.e.
\begin{align}\label{10.2}
0&\leq \frac{t_0}{h^1_1}\bigg(-\frac{2}{F}h^1_kh^k_1-\frac{1}{F}\bar{R}^1_{\nu 1\nu}+\frac{g^{k1}F^{pq}}{F^2} \big( h_{k1,pq}+h^m_{q}h_{k1}h_{mp}\\\nonumber
&+h^m_{q}\bar{R}_{kp1m}+h^m_{1}\bar{R}_{kpqm}+\nabla_p\bar{R}_{k1q\nu}+\nabla_k\bar{R}_{1pq\nu} \big)\bigg)\\\nonumber
&-\frac{t_0}{u}\left(\frac{\phi^\prime}{F}+\frac{\phi  g^{ij}F_ir_j}{F^2}\right)+\frac{vt_0}{F}+C
\end{align}

consider the term $\frac{F^{pq}}{F^2} \frac{h^1_{1,pq}}{h^1_1} $,  , by (\ref{4.2}), Lemma \ref{support function} and the critical equation we have
\begin{align}
\frac{F^{pq}}{F^2} \frac{h^1_{1,pq}}{h^1_1} &\leq \frac{F^{pq}}{F^2} \left(\frac{u_{pq}}{u}+\frac{h^1_{1p}h^1_{1q}}{(h^1_1)^2}-\frac{u_pu_q}{u^2}-r_{pq}\right)\\\nonumber
&=\frac{F^{pq}}{F^2u}\left(g^{kl}h_{pqk}\Phi_l+\phi^\prime h_{pq}-(h^2)_{pq}u+g^{kl}\nabla_l\Phi\bar{R}_{\nu pkq}\right)\\\nonumber
&+\frac{ F^{pq}}{F^2}\left(h_{pq}v-\phi\phi^\prime\delta_{pq}-\frac{2\phi^\prime r_pr_q}{\phi}\right)+\frac{F^{pq}}{F^2}\left(\frac{h^1_{1p}h^1_{1q}}{(h^1_1)^2}-\frac{u_pu_q}{u^2} \right)
\end{align}
plug into (\ref{10.2}), we have
\begin{align}\label{fffformula}
0&\leq \frac{t_0}{h^1_1}\bigg(-\frac{2}{F}h^1_kh^k_1-\frac{1}{F}\bar{R}^1_{\nu 1\nu}+\frac{g^{k1}F^{pq}}{F^2} \big(h^m_{q}\bar{R}_{kp1m}+h^m_{1}\bar{R}_{kpqm}+\nabla_p\bar{R}_{k1q\nu}+\nabla_k\bar{R}_{1pq\nu} \big)\bigg)\\\nonumber
&+\frac{t_0g^{kl}F^{pq}}{F^2u}\nabla_l\Phi\bar{R}_{\nu pkq}-\frac{ t_0F^{pq}}{F^2}\left(\phi\phi^\prime\delta_{pq}+\frac{2\phi^\prime r_pr_q}{\phi}\right)+\frac{t_0F^{pq}}{F^2}\left(\frac{h^1_{1p}h^1_{1q}}{(h^1_1)^2}-\frac{u_pu_q}{u^2} \right)+\frac{2vt_0}{F}+C
\end{align}

by Lemma \ref{ADS curvature} and Lemma \ref{C^1 estimate}, we have
\begin{align*}
&\frac{g^{k1}F^{pq}}{F^2} \big(h^m_{q}\bar{R}_{kp1m}+h^m_{1}\bar{R}_{kpqm}+\nabla_p\bar{R}_{k1q\nu}+\nabla_k\bar{R}_{1pq\nu} \big)\\
=&\frac{ F^p_p}{F^2}(-h^p_p+h^1_1)+O(e^{-\alpha t_0})\\
=&-\frac{1}{F}+\frac{ F^p_p}{F^2}h^1_1+O(e^{-\alpha t_0})
\end{align*}

similarly,
\begin{align*}
-\frac{1}{F}\bar{R}^1_{\nu 1\nu}=\frac{1}{F}+O(e^{-\alpha t_0}), \quad \frac{g^{kl}F^{pq}}{F^2u}\nabla_l\Phi\bar{R}_{\nu pkq}=O(e^{-\alpha t_0})
\end{align*}

Plug into (\ref{fffformula}), we have
\begin{align*}
0&\leq \frac{t_0}{h^1_1}\left(-\frac{2}{F}h^1_kh^k_1+\frac{F^p_p}{F^2}h^1_1\right)+\frac{2vt_0}{F}+C\\\nonumber
&-\frac{ t_0F^{pq}}{F^2}\left(\phi\phi^\prime\delta_{pq}+\frac{2\phi^\prime r_pr_q}{\phi}\right)+\frac{t_0F^{pq}}{F^2}\left(\frac{h^1_{1p}h^1_{1q}}{(h^1_1)^2}-\frac{u_pu_q}{u^2} \right)
\end{align*}

By the critical equation, we have
\begin{align*}
\frac{F^{pq}}{F^2}\left(\frac{h^1_{1p}h^1_{1q}}{(h^1_1)^2}-\frac{u_pu_q}{u^2} \right)=\frac{F^{pq}}{F^2}\left(-\frac{2u_pr_q}{u}+r_pr_q\right)
\end{align*}

Since 
\begin{align*}
\nabla_i u &=g^{kl}h_{ik}\nabla_l \Phi=g^{kl}h_{ik}\phi r_l
\end{align*}
together with Lemma \ref{C^1 estimate}, we have
\begin{align*}
\frac{t_0F^{pq}}{F^2}\left(\frac{h^1_{1p}h^1_{1q}}{(h^1_1)^2}-\frac{u_pu_q}{u^2} \right)\leq C
\end{align*}

thus
\begin{align*}
0&\leq \frac{t_0}{h^1_1}\left(-\frac{2}{F}h^1_kh^k_1+\frac{F^p_p}{F^2}h^1_1\right)+\frac{2vt_0}{F}+C\\\nonumber
&-\frac{ t_0F^{pq}}{F^2}\left(\phi\phi^\prime\delta_{pq}+\frac{2\phi^\prime r_pr_q}{\phi}\right)
\end{align*}

Again by Lemma \ref{C^1 estimate} and the relation $\phi^\prime=\phi+O(1)$, we have
\begin{align*}
0&\leq \frac{t_0}{h^1_1}\bigg(-\frac{2}{F}h^1_kh^k_1+\frac{F^{p}_p}{F^2} h^1_1\bigg)+\frac{2t_0}{F}+C-\frac{ t_0F^p_p}{F^2}\\
&= -\frac{2t_0}{F}h^1_1+\frac{2t_0}{F}+C
\end{align*}

thus
\begin{align*}
h^1_1-1\leq \frac{C}{t_0}
\end{align*}
we have
\begin{align*}
w\leq t_0\log\left(1+\frac{C}{t_0}\right)+t_0\left(-\log\tilde{u}+\tilde{r}-\log 2 \right)\leq C
\end{align*}
thus
\begin{align*}
\left(\log h^1_1-\log\tilde{u}+\tilde{r}-\log 2 \right)t\leq C
\end{align*}
for any $t$, together with (\ref{lower bound of}), we have
\begin{align*}
\limsup_{t\rightarrow \infty}\sup_{M}\kappa_i(t,\cdot)\leq 1
\end{align*}
\end{proof}

\begin{lemm}\label{lower bound of F}
$ F\geq n-Cte^{-2\alpha t}$, where C depends on $\Sigma_0,n,k$.
\end{lemm}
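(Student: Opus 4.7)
The plan is to apply the minimum principle to $F$ over $\Sigma_t$ and derive an ODI of the form $\dot{F}_{\min}\geq 2\alpha(n-F)-Ce^{-2\alpha t}$, then integrate via an integrating factor. At a spatial minimum of $F$, $\nabla F=0$ and $F^{ij}\nabla_i\nabla_j F\geq 0$, so the evolution equation (\ref{6.5}) yields
\begin{align*}
\dot{F}_{\min}\geq -\frac{F^{ii}\lambda_i^2}{F}-\frac{F^{ii}\bar{R}^i_{\nu i\nu}}{F}.
\end{align*}
By Lemma \ref{ADS curvature}, $-\bar{R}^i_{\nu i\nu}=1+O(e^{-(n+1)r})$; combined with the $C^0$ estimate $r\geq t/n+O(1)$ of Lemma \ref{C^0 estimate}, the ambient-curvature contribution is $\sum F^{ii}/F+O(e^{-t})$.

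Next, homogeneity $\sum F^{ii}\lambda_i=F$ gives the identity $\sum F^{ii}\lambda_i^2=2F-\sum F^{ii}+\sum F^{ii}(\lambda_i-1)^2$, and Lemma \ref{sigma_k 2} ($\sum F^{ii}\geq n$) then yields, whenever $F<n$,
\begin{align*}
\dot{F}_{\min}\geq \frac{2(n-F)}{F}-\frac{\sum F^{ii}(\lambda_i-1)^2}{F}-Ce^{-t}.
\end{align*}
The crucial step is to dominate $\sum F^{ii}(\lambda_i-1)^2$ by $Ce^{-2\alpha t}$. For this I would use the graph representation (\ref{4.6}) to split $h^i_j-\delta^i_j$ into a diagonal piece $(\phi'/(\phi v)-1)\delta^i_j$, which is $O(e^{-2\alpha t})+O(e^{-2t/n})$ via $\phi'/\phi-1=O(\phi^{-2})=O(e^{-2t/n})$ and $v-1=O(|\nabla\varphi|^2)$ from Lemma \ref{C^1 estimate}, together with an off-diagonal piece $(\phi v)^{-1}\tilde\sigma^{ik}\varphi_{kj}$; after diagonalizing $h^i_j$, the $C^2$ bound (Lemma \ref{princial curvature}), the asymptotic bound $\sup\kappa_i\leq 1+C/t$ extracted from the proof of Lemma \ref{upbound of F}, and the exponential decay of $|\nabla\varphi|$ combine to control this piece by $O(e^{-2\alpha t})$ after contraction with $F^{ij}$.

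With $\dot{F}_{\min}\geq 2\alpha(n-F)-Ce^{-2\alpha t}$ in hand (noting $2/F\geq 2\alpha$ for $\alpha$ chosen compatible with Lemma \ref{C^1 estimate}), set $y(t)=(n-F_{\min}(t))e^{2\alpha t}$; then $y'\leq C$ so $y(t)\leq Ct+y(0)$, giving $F\geq n-Cte^{-2\alpha t}$. The linear factor of $t$ is unavoidable precisely because the homogeneous decay rate $2\alpha\approx 2/n$ matches the decay rate of the forcing $|\nabla\varphi|^2\sim e^{-2\alpha t}$ (an integrating-factor resonance).

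The main obstacle is the quadratic bound in the middle paragraph: converting the merely polynomial asymptotic $\sup\kappa_i-1=O(1/t)$ and the linear exponential gradient decay $|\nabla\varphi|=O(e^{-\alpha t})$ into the \emph{quadratic} exponential estimate on $\sum F^{ii}(\lambda_i-1)^2$. Since we do not yet have any lower bound on $\lambda_i-1$, the control cannot come from pointwise sizes of the principal curvatures; it must be extracted from the graph structure (\ref{4.6}) relating the off-diagonal part of $h^i_j-\delta^i_j$ to the exponentially small quantity $|\nabla\varphi|$.
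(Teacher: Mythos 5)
The gap is in the middle step, and it is fatal. You need $\sum F^{ii}(\lambda_i-1)^2\leq Ce^{-2\alpha t}$, but nothing available at this stage gives it. From (\ref{4.6}), in a frame diagonalizing $(g_{ij},h_{ij})$ the diagonal entries of $\tfrac{1}{\phi v}\tilde\sigma^{ik}\varphi_{kj}$ equal $\tfrac{\phi'}{\phi v}-\kappa_i\approx 1-\kappa_i$; this is the \emph{entire} second-derivative contribution (not an ``off-diagonal remainder''), and since $\varphi_{ij}$ is only bounded by $Ce^{t/n}$ from the $C^2$ estimate, the quantity $\tfrac{1}{\phi v}\tilde\sigma^{ik}\varphi_{kj}$ is $O(1)$, not controlled by $|\nabla\varphi|$. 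The exponential decay of $|\nabla\varphi|$ (a first derivative) says nothing about $|\nabla^2\varphi|$, and Lemma \ref{upbound of F} plus the $C^2$ bound only give $-C\leq\kappa_i\leq 1+C/t$, with no lower bound on $\kappa_i$ near $1$. Thus $\sum F^{ii}(\lambda_i-1)^2$ can a priori be of order one, and asserting it is $O(e^{-2\alpha t})$ is essentially assuming a pinching estimate at least as strong as the theorem you are trying to prove.

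The paper's proof avoids this entirely. It works with $w=v/F$ in the graph parametrization, where the parabolic term $F^i_l\tilde\sigma^{lj}\phi_{ij}$ is rewritten via the identity $\tilde\sigma^{lj}\varphi_{ij}=\phi'\delta^l_i-\phi v\,h^l_i$ (a rearrangement of (\ref{4.6})), so that $F^i_l\tilde\sigma^{lj}\varphi_{ij}=\phi'\sum F^{ii}-\phi v F$. Only the traces $\sum F^{ii}$ and $F$ appear; the dangerous quadratic term $\sum F^{ii}(\lambda_i-1)^2$ never arises. Applying $\sum F^{ii}\geq n$ (Lemma \ref{sigma_k 2}) produces the leading-order cubic $2w^2-2nw^3$ with forcing $Ce^{-2\alpha t}$ coming solely from $|\nabla\varphi|$-terms, which is precisely the structure needed for the integrating-factor conclusion. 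You would need to replace your direct maximum-principle computation on $F$ with one that exploits this algebraic cancellation; as written, the middle paragraph cannot be completed.
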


\begin{proof}
Consider the test function $w=\frac{v}{F}$, thus $\dot{\varphi}=\frac{1}{G}=\frac{w}{\phi}$, we have
\begin{align*}
\frac{\partial w}{\partial t}=\phi \frac{\partial \dot{\varphi}}{\partial t}+\phi\phi^\prime \dot{\varphi}^2
\end{align*}
Let $G^{ij}=\frac{\partial G}{\partial \varphi_{ij}}$, $G^k=\frac{\partial G}{\partial \varphi_k}$, then
\begin{align*}
G^{ij}=-\frac{1}{v^2}F^{i}_l\tilde{\sigma}^{lj}
\end{align*}
similar to Lemma \ref{time derivative}, we have
\begin{align*}
\frac{\partial w}{\partial t}&=\frac{\phi}{v^2G^2}\left(F^{i}_l\tilde{\sigma}^{lj}\dot{\varphi}_{ij}-v^2 G^k\dot{\varphi}_k-F^{i}_i\phi\phi^{\prime\prime} \dot{\varphi}\right)+\frac{\phi^\prime}{\phi}w^2\\
&=\frac{w^2}{v^2\phi}\left(F^{i}_l\tilde{\sigma}^{lj}\left(\frac{w}{\phi}\right)_{ij}-v^2 G^k\left(\frac{w}{\phi}\right)_k-F^{i}_i\phi^{\prime\prime}w\right)+\frac{\phi^\prime}{\phi}w^2\\
&=\frac{w^2}{v^2\phi^2}\left(F^{i}_l\tilde{\sigma}^{lj}w_{ij}-\frac{2}{\phi} F^{i}_l\tilde{\sigma}^{lj}w_i\phi_j-v^2 G^kw_k\right)\\
&+\frac{w^2}{v^2\phi^2}\left(\frac{2w}{\phi^2}F^{i}_l\tilde{\sigma}^{lj}\phi_i\phi_j-\frac{w}{\phi}F^{i}_l\tilde{\sigma}^{lj}\phi_{ij}+\frac{v^2w}{\phi}G^k\phi_k\right)\\
&+\frac{\phi^\prime}{\phi}w^2-\frac{F^{i}_i\phi^{\prime\prime}}{v^2\phi}w^3
\end{align*}

First, note that $w$ is bounded by our previous estimate, thus we only need to consider the second line.

By Lemma \ref{C^1 estimate}, We have
\begin{align*}
\frac{2w}{\phi^2}F^{i}_l\tilde{\sigma}^{lj}\phi_i\phi_j\leq C e^{(\frac{2}{n}-\alpha)t}
\end{align*}

Now by (\ref{4.3})
\begin{align*}
\phi_{ij}&=\phi^\prime r_{ij}+\phi^{\prime\prime}r_ir_j\\
&=\phi\phi^\prime \varphi_{ij}+\phi\left({\phi^\prime}^2+\phi\phi^{\prime\prime}\right)\varphi_i\varphi_j
\end{align*}
thus
\begin{align*}
-F^{i}_l\tilde{\sigma}^{lj}\phi_{ij}&=-\phi\phi^\prime F^{i}_l\tilde{\sigma}^{lj}\varphi_{ij}-\phi\left({\phi^\prime}^2+\phi\phi^{\prime\prime}\right)F^{i}_l\tilde{\sigma}^{lj}\varphi_i\varphi_j\\
&\leq -\phi\phi^\prime F^{i}_l\left(\phi^\prime \delta^l_i-\phi v h^l_i\right)+Ce^{(\frac{3}{n}-2\alpha )t}
\end{align*}
By lemma \ref{sigma_k 2}, 
\begin{align*}
-F^{i}_l\tilde{\sigma}^{lj}\phi_{ij}&\leq -\phi\phi^\prime\left(n\phi^\prime -\phi v F\right)+Ce^{(\frac{3}{n}-2\alpha )t}\\
&=\phi\phi^\prime\left(v^2\frac{\phi}{w} -n\phi^\prime \right)+Ce^{(\frac{3}{n}-2\alpha )t}
\end{align*}
i.e.
\begin{align*}
-\frac{w}{\phi}F^{i}_l\tilde{\sigma}^{lj}\phi_{ij}\leq \phi\phi^\prime v^2-n{\phi^\prime}^2w+Ce^{(\frac{2}{n}-2\alpha )t}
\end{align*}

Now, consider $G^k$, we have
\begin{align*}
G^k=\frac{F^i_k\varphi_{ij}}{v^2}\frac{\varphi^i}{v^2}-2\frac{F^i_l\varphi_{ij}}{v^2}\frac{\varphi^l\varphi^j\varphi^k}{v^4}-2\frac{F}{v^4}\varphi^k
\end{align*}
since $h^i_j$ is bounded, by (\ref{4.6}), we have $|\varphi_{ij}|\leq Ce^{\frac{t}{n}}$. thus
\begin{align*}
G^k\phi_k=\phi\phi^\prime G^k\varphi_k=\phi\phi^\prime\left(\frac{F^i_k\varphi_{ij}}{v^4}\varphi^j\varphi_k-2\frac{F^i_l\varphi_{ij}}{v^6}\varphi^l\varphi^j|\nabla\varphi|^2-2\frac{F}{v^4}|\nabla \varphi|^2\right)\leq Ce^{(\frac{3}{n}-2\alpha)t}
\end{align*}
Put all together, we have

\begin{align*}
\frac{\partial w}{\partial t}&\leq \frac{w^2}{v^2\phi^2}\left(F^{i}_l\tilde{\sigma}^{lj}w_{ij}-\frac{2}{\phi} F^{i}_l\tilde{\sigma}^{lj}w_i\phi_j-v^2 G^kw_k\right)\\
&+\frac{w^2}{v^2\phi^2}\left(\phi\phi^\prime v^2-n{\phi^\prime}^2w\right)+\frac{\phi^\prime}{\phi}w^2-\frac{F^{i}_i\phi^{\prime\prime}}{v^2\phi}w^3+Ce^{-2\alpha t}\\
&\leq \frac{w^2}{v^2\phi^2}\left(F^{i}_l\tilde{\sigma}^{lj}w_{ij}-\frac{2}{\phi} F^{i}_l\tilde{\sigma}^{lj}w_i\phi_j-v^2 G^kw_k\right)\\
&+2\frac{\phi^\prime}{\phi}w^2-n\frac{\phi\phi^{\prime\prime}+{\phi^\prime}^2}{v^2\phi^2}w^3+Ce^{-2\alpha t}
\end{align*}
by Lemma \ref{C^1 estimate} and Lemma \ref{ADS curvature}, we have
\begin{align*}
\frac{d}{dt}w_{max}\leq 2w_{max}^2-2nw_{max}^3+Ce^{-2\alpha t}
\end{align*}

By Lemma \ref{upbound of F}, we have $w_{max}\geq \frac{1}{n}$, thus
\begin{align*}
\frac{d}{dt}w_{max}\leq \frac{2}{n^2}-\frac{2}{n} w_{max}+Ce^{-2\alpha t}
\end{align*}
thus
\begin{align*}
w_{max}\leq \frac{1}{n}+Cte^{-2\alpha t}
\end{align*}
thus
\begin{align*}
F\geq n-Cte^{-2\alpha t}
\end{align*}

\end{proof}

Put lemma \ref{upbound of F} and lemma \ref{lower bound of F} together, we have
\begin{coro}\label{cor 1}
\begin{align*}
|h^i_j-\delta^i_j|\rightarrow 0,
\end{align*}
as $t\rightarrow\infty$
\end{coro}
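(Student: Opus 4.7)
The plan is to read off the corollary directly from the two preceding lemmas together with the concavity and symmetry of $F$ at the point $(1,\ldots,1)$. Lemma \ref{upbound of F} gives the one-sided upper bound $\limsup_{t\to\infty}\sup_i \kappa_i(t)\leq 1$, and Lemma \ref{lower bound of F} gives $F(\kappa(t))\geq n-Cte^{-2\alpha t}\to n$. All that remains is a short pinching argument showing that $\kappa_i\leq 1+o(1)$ together with $F(\kappa)\geq n-o(1)$ forces $\kappa_i\to 1$.

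The main step is the linearization of $F$ at the identity. Since $F$ is smooth, symmetric, homogeneous of degree $1$, and concave in the principal curvatures, and since $F(1,\ldots,1)=n$, Euler's identity combined with symmetry gives $F^i(\mathbf{1})=1$ for every $i$, so concavity yields the global inequality
\begin{equation*}
F(\kappa)\leq F(\mathbf{1})+\sum_i F^i(\mathbf{1})(\kappa_i-1)=n+\sum_i(\kappa_i-1).
\end{equation*}
Fix $\varepsilon>0$. By Lemma \ref{upbound of F} there exists $T_1$ such that $\kappa_i(t)\leq 1+\varepsilon$ for all $i$ and all $t\geq T_1$, and by Lemma \ref{lower bound of F} there exists $T_2$ such that $F(\kappa(t))\geq n-\varepsilon$ for $t\geq T_2$. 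For $t\geq \max(T_1,T_2)$ the displayed inequality gives $\sum_i(\kappa_i(t)-1)\geq -\varepsilon$. Combined with $\kappa_j-1\leq \varepsilon$ for each $j\neq i$, this forces $\kappa_i(t)\geq 1-n\varepsilon$ for every $i$, so $\liminf_{t\to\infty}\inf_i\kappa_i(t)\geq 1$.

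Putting the two sides together, $\kappa_i(t)\to 1$ uniformly in $i$. Since $h^i_j$ is diagonalized in an orthonormal frame of principal directions with eigenvalues $\kappa_i$, this is exactly the statement $|h^i_j-\delta^i_j|\to 0$, which proves the corollary.

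I do not anticipate a serious obstacle: the ellipticity, concavity, and the normalization $F(\mathbf{1})=n$ are built into the definition of $F$ and have already been used throughout the paper, and the two analytic inputs (pinching from above and pinching $F$ from below) are precisely what Lemmas \ref{upbound of F} and \ref{lower bound of F} provide. The only item worth checking carefully is the symmetry argument that $F^i(\mathbf{1})=1$, which follows from the fact that $F$ depends only on the symmetric functions $\sigma_{k-1},\sigma_k$ and their partial derivatives at $\mathbf{1}$ are $S_n$-invariant, together with Euler's identity $\sum_i F^i(\mathbf{1})=F(\mathbf{1})=n$.
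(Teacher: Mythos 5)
Your proposal is correct, and it supplies exactly the pinching argument the paper leaves implicit when it says ``put Lemma~\ref{upbound of F} and Lemma~\ref{lower bound of F} together.'' The key step---that a symmetric, concave, degree-one homogeneous $F$ with $F(\mathbf{1})=n$ satisfies $F(\kappa)\leq n+\sum_i(\kappa_i-1)$, so $\limsup\sup_i\kappa_i\leq 1$ and $F\geq n-o(1)$ jointly force $\liminf\inf_i\kappa_i\geq 1$---is the natural and intended reasoning, and your use of symmetry plus Euler's identity to get $F^i(\mathbf{1})=1$ is the right justification for the linearization.
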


Now let's compute the convergence rate, we have the following lemma,
\begin{lemm}
\begin{align*}
|h^i_j-\delta^i_j|\leq O(e^{-\frac{2}{n}t}).
\end{align*}
\end{lemm}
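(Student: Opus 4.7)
The plan is to sharpen the barrier arguments of Lemmas \ref{upbound of F} and \ref{lower bound of F} by trading the polynomial weight $t$ for an exponential weight $e^{\lambda t}$ with $\lambda$ slightly less than $2/n$. By Corollary \ref{cor 1} we already have $F\to n$ and $\kappa_i\to 1$; the point now is to quantify this convergence at an exponential rate.

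First I would revisit Lemma \ref{C^1 estimate} using the improved bound $\sup F\le n + o(1)$ and the test function $\omega e^{\lambda t}$; the same maximum principle argument then works for any $\lambda<2/n$ and yields $|\nabla\varphi|^2\le C_\lambda e^{-\lambda t}$. Substituting this improved gradient estimate into the ODE from the proof of Lemma \ref{lower bound of F}, the error $Ce^{-2\alpha t}$ is upgraded to $Ce^{-\lambda t}$; integration with the integrating factor $e^{2t/n}$ then yields $F\ge n - C_\lambda e^{-\lambda t}$, eliminating the extraneous polynomial factor present in the original.

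Next I would replay the calculation of Lemma \ref{upbound of F} with the new test function
\[
w = e^{\lambda t}\bigl(\log\eta - \log\tilde u + r - \log 2\bigr).
\]
Writing $g$ for the parenthesized expression and assuming the maximum occurs at $(x_0,t_0)$ with $\eta = h^1_1$, the critical-time inequality $\partial_t w\ge 0$ becomes $\lambda g + \partial_t g\ge 0$. The computation of $\partial_t g$ at the spatial critical point is identical to that of Lemma \ref{upbound of F}, but uses the sharper estimates just obtained to conclude $\partial_t g\le -\tfrac{2}{F}(h^1_1 - 1) + Ce^{-\lambda t_0}$. Combining with $g\le (h^1_1 - 1) + Ce^{-\lambda t_0}$ produces
\[
\bigl(\tfrac{2}{F} - \lambda\bigr)(h^1_1 - 1)\le C e^{-\lambda t_0},
\]
from which $h^1_1 - 1\le C_\lambda e^{-\lambda t}$ follows for every $\lambda<2/n$. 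A symmetric argument on the smallest principal curvature (replacing $\log\eta$ by $-\log\inf_{|\xi|=1}h_{ij}\xi^i\xi^j$) gives $\kappa_i\ge 1 - C_\lambda e^{-\lambda t}$, and combining the two bounds establishes the lemma.

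The main obstacle is the borderline nature of the exponent $2/n$. In the inequality $\bigl(\tfrac{2}{F} - \lambda\bigr)(h^1_1 - 1)\le C e^{-\lambda t_0}$ the coefficient degenerates as $\lambda\uparrow 2/n$, so $C_\lambda$ blows up in that limit. Reaching exactly $\lambda = 2/n$, as opposed to any rate strictly less, either requires allowing a polynomial factor (consistent with the $O(\cdot)$ in the statement) or introducing a compensating correction term in the test function; this careful bookkeeping is the only real subtlety.
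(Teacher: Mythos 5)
Your proposal takes a genuinely different route from the paper's, and, as you yourself flag, it does not actually reach the stated exponent, so there is a real gap. The paper works directly with the quadratic tensorial test function $G=\tfrac12\sum_{ij}\bigl(h^i_j-\delta^i_j\bigr)\bigl(h^j_i-\delta^j_i\bigr)e^{\lambda t}$ and the evolution inequality coming from (\ref{6.6}); at a spatial maximum the derivative terms of $h$ are absorbed by $-\tfrac{F^{pq}}{F^2}h^i_{jp}h^j_{iq}$, and the reaction term reorganizes as $-\tfrac{2}{F}\sum_i\kappa_i(\kappa_i-1)^2+\tfrac{F^{pp}}{F^2}\kappa_i(\kappa_p-1)^2(\kappa_i-1)+O(e^{-2t/n})(\kappa_i-1)$. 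It then runs a two-step bootstrap: first take $\lambda$ small to get some exponential decay $|h^i_j-\delta^i_j|=O(e^{-\lambda t/2})$; then replace the weight by $e^{4t/n}$, at which point the Gronwall coefficient $-\tfrac{4}{F}\kappa_i+\tfrac{4}{n}+2\tfrac{F^{pp}}{F^2}\kappa_i|\kappa_i-1|$ is itself $O(e^{-\lambda t/2})$ by the first step and hence integrable in $t$, so $\tilde G$ stays bounded. The borderline exponent $4/n$ is attained exactly because the leading reaction term cancels the weight's $\tfrac{4}{n}$ as $\kappa_i\to1,\ F\to n$, and the \emph{deviation} from perfect cancellation decays; there is no tip-toeing up to the critical rate.

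Your scheme instead reruns the scalar Lemmas~\ref{C^1 estimate}, \ref{lower bound of F}, \ref{upbound of F} with $t$ replaced by $e^{\lambda t}$. Even granting every step, this yields only $|h^i_j-\delta^i_j|\le C_\lambda e^{-\lambda t}$ for each fixed $\lambda<2/n$ with $C_\lambda\to\infty$ as $\lambda\uparrow 2/n$, which is strictly weaker than $O(e^{-2t/n})$; your remark that the $O(\cdot)$ in the statement tolerates a polynomial factor is not a correct reading of that notation, and the "compensating correction term" alternative is not produced. You also leave unchecked the decisive point that, in the Lemma~\ref{upbound of F} computation, all the ambient-curvature, $\nabla\bar R$, and $|\nabla\varphi|$ error terms really decay like $e^{-\lambda t_0}$ with $\lambda$ arbitrarily close to $2/n$ after multiplying by $e^{\lambda t_0}$; this bookkeeping is exactly what you defer and is not routine, since the available rate on $|\nabla\varphi|^2$ from Lemma~\ref{C^1 estimate} is itself only $e^{-(2/n-\epsilon)t}$. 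The feature of the paper's proof you are missing is the switch from a largest-eigenvalue test function (with a one-sided, sign-indefinite barrier) to the squared Frobenius norm, which both gives the two-sided estimate in one stroke and produces the exact cancellation needed at the critical exponent.
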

\begin{proof}
Consider the test function
\begin{align*}
G=\frac{1}{2}\sum_{ij}\left(h^i_j-\delta^i_j\right)\left(h^j_i-\delta^j_i\right)e^{\lambda t}
\end{align*}
we have
\begin{align*}
\dot{G}=\sum_{ij}\dot{h^i_j}\left(h^j_i-\delta^j_i\right)e^{\lambda t}+\lambda G
\end{align*}
for each $t$, $G$ attains maximum at some point $x_0$, at $x_0$
\begin{align*}
&\sum_{ij} h^i_{jk}\left(h^j_i-\delta^j_i\right)=0\\
&\sum_{ij} h^i_{jkl}\left(h^j_i-\delta^j_i\right)+h^i_{jk}h^j_{il}\leq 0
\end{align*}
thus
\begin{align*}
\dot{G}&= \bigg(-\frac{1}{F}h^i_kh^k_j+ \frac{F^{pq,rs}{h_{pq}}^ih_{rsj}}{F^2}  -\frac{2F^{pq}{h_{pq}}^iF^{rs}h_{rsj}}{F^3}-\frac{1}{F}\bar{R}^i_{\nu j\nu}\\\nonumber
&+\frac{g^{ki}F^{pq}}{F^2} \big( h_{kj,pq}-h^m_{q}(h_{km}h_{pj}-h_{kj}h_{mp})-h^m_{j}(h_{mk}h_{pq}-h_{kq}h_{mp})\\\nonumber
&+h^m_{q}\bar{R}_{kpjm}+h^m_{j}\bar{R}_{kpqm}+\nabla_p\bar{R}_{kjq\nu}+\nabla_k\bar{R}_{jpq\nu} \big)\bigg)\left(h^j_i-\delta^j_i\right)e^{\lambda t}+\lambda G
\end{align*}
by the critical equation, we have
\begin{align*}
\dot{G}&\leq \bigg(-\frac{1}{F}h^i_kh^k_j+ \frac{F^{pq,rs}{h_{pq}}^ih_{rsj}}{F^2}  -\frac{2F^{pq}{h_{pq}}^iF^{rs}h_{rsj}}{F^3}-\frac{1}{F}\bar{R}^i_{\nu j\nu}\\\nonumber
&+\frac{g^{ki}F^{pq}}{F^2} \big( -h^m_{q}(h_{km}h_{pj}-h_{kj}h_{mp})-h^m_{j}(h_{mk}h_{pq}-h_{kq}h_{mp})\\\nonumber
&+h^m_{q}\bar{R}_{kpjm}+h^m_{j}\bar{R}_{kpqm}+\nabla_p\bar{R}_{kjq\nu}+\nabla_k\bar{R}_{jpq\nu} \big)\bigg)\left(h^j_i-\delta^j_i\right)e^{\lambda t}-\frac{F^{pq}}{F^2}h^i_{jp}h^j_{iq}e^{\lambda t}+\lambda G
\end{align*}
by Corollary \ref{cor 1}, all the terms involving the derivatives of $h^i_j$ can be controlled by $-\frac{F^{pq}}{F^2}h^i_{jp}h^j_{iq}$, thus
\begin{align*}
\dot{G}&\leq \bigg(-\frac{1}{F}h^i_kh^k_j-\frac{1}{F}\bar{R}^i_{\nu j\nu}+\frac{ g^{ki}F^{pq}}{F^2} \big( -h^m_{q}(h_{km}h_{pj}-h_{kj}h_{mp})-h^m_{j}(h_{mk}h_{pq}-h_{kq}h_{mp})\\\nonumber
&+h^m_{q}\bar{R}_{kpjm}+h^m_{j}\bar{R}_{kpqm}+\nabla_p\bar{R}_{kjq\nu}+\nabla_k\bar{R}_{jpq\nu} \big)\bigg)\left(h^j_i-\delta^j_i\right)e^{\lambda t}+\lambda G
\end{align*}

Diagonalized it, we have
\begin{align*}
g_{ij}=\delta_{ij}, h_{ij}=\kappa_i\delta_{ij},\kappa_1\leq \cdots\leq \kappa_n
\end{align*}
and by Lemma \ref{ADS curvature}, Lemma \ref{C^1 estimate} and Lemma \ref{upbound of F}, we have
\begin{align*}
\dot{G}&\leq \bigg(-\frac{1}{F}\kappa_i^2+\frac{F^{pp}}{F^2}\bigg(\kappa_i\kappa_p^2-\kappa_i^2\kappa_p+\kappa_i\bigg)+ce^{-\frac{2}{n}t}\bigg)\left(\kappa_i-1\right)e^{\lambda t}+\lambda G\\
&=\bigg(-\frac{2}{F}\left(\kappa_i^2-\kappa_i\right)+\frac{F^{pp}}{F^2}\kappa_i\left(\kappa_p-1\right)^2+ce^{-\frac{2}{n}t}\bigg)\left(\kappa_i-1\right)e^{\lambda t}+\lambda G\\
&\leq \bigg(-\frac{4}{F}\kappa_i+\lambda+2\frac{F^{pp}}{F^2}\kappa_i|\kappa_i-1|\bigg)G+c \left(\kappa_i-1\right)e^{(-\frac{2}{n}+\lambda)t }
\end{align*}
Thus if we choose $\lambda$ small enough, we conclude that $G$ is bounded, i.e. $|h^i_j-\delta^i_j|=O(e^{-\frac{\lambda}{2} t})$ for small $\lambda$. 

Now if we choose $\tilde{G}=\sup_M\frac{1}{2}|h^i_j-\delta^i_j|^2e^{\frac{4t}{n}}$, we have
\begin{align*}
\dot{\tilde{G}}&\leq \bigg(-\frac{4}{F}\kappa_i+\frac{4}{n}+2\frac{F^{pp}}{F^2}\kappa_i|\kappa_i-1|\bigg)\tilde{G}+ce^{-\frac{\lambda t}{2}}\\
&\leq ce^{-\frac{\lambda t}{2}}\tilde{G}+ce^{-\frac{\lambda t}{2}}
\end{align*}
write $\sqrt{\tilde{G}}=f$, we have
\begin{align*}
\dot{f}\leq ce^{-\frac{\lambda t}{2}}f+ce^{-\frac{\lambda t}{2}}
\end{align*}
thus $f\leq C$, we proved the lemma.
\end{proof}

\medskip

\noindent
{\it Acknowledgement}: The author would like to express gratitude to his supervisor Professor Pengfei Guan for consistent support and encouragement.

\end{document}